\documentclass{amsproc}
\usepackage{amsmath}
\usepackage{amssymb}
\input xy
\xyoption{all}

\theoremstyle{definition}
\newtheorem{theorem}{Theorem}[section]

\theoremstyle{definition}

\newtheorem{example}[theorem]{Example}
\newtheorem{Lemma}[theorem]{Lemma}
\newtheorem{corollary}[theorem]{Corollary}
\newtheorem{Prop}[theorem]{Proposition}
\theoremstyle{remark}
\newtheorem{remark}[theorem]{Remark}

\numberwithin{equation}{section}
\newcommand{\Cal}[1]{{\mathcal #1}}
\newcommand{\codim}{\mbox{\rm codim}}
\newcommand{\End}{\operatorname{End}}
\newcommand{\Hom}{\operatorname{Hom}}
\DeclareMathOperator{\Ob}{Ob}
\newcommand{\Mod}{\operatorname{Mod-\!}}

\newcommand{\cmat}{\left(\begin{array}}
\newcommand{\fmat}{\end{array}\right)}

\newcommand{\Ui}{U^{(i)}}
\newcommand{\Vi}{V^{(i)}}
\newcommand{\U}[1]{U^{(#1)}}
\newcommand{\V}[1]{V^{(#1)}}

\begin{document}

 \title[On a category of chains of modules]{On a category of chains of modules whose endomorphism rings have at most $2n$ maximal ideals}
  
\author{Federico Campanini}
\address{Dipartimento di Matematica, Universit\`a di Padova, 35121 Padova, Italy}
\email{federico.campanini@math.unipd.it}

\thanks{Partially supported by Dipartimento di Matematica ``Tullio Levi-Civita'' of Universit\`a di Padova (Project BIRD163492/16 ``Categorical homological methods in the study of algebraic structures'' and Research program DOR1690814 ``Anelli e categorie di moduli'').}

\subjclass[2010]{Primary 16D70, 16D80.}

\begin{abstract} We describe the endomorphism rings in an additive category whose objects are right $R$-modules $M$ with a fixed chain of submodules $0=M^{(0)}\leq M^{(1)}\leq M^{(2)} \leq \dots \leq M^{(n)}=M$ and the behaviour of these objects as far as their direct sums are concerned.
\end{abstract}

\maketitle

\section{Introduction}

Let $R$ be a ring, associative and with an identity 1, and  let $n$ be a fixed positive integer. In this paper, we focus our attention to a category $\Cal E_n$ whose objects are right $R$-modules $M$ with a fixed chain of submodules $0=M^{(0)}\leq M^{(1)}\leq M^{(2)} \leq \dots \leq M^{(n)}=M$. This category is a very natural generalization of the category $\Cal E$ studied in \cite{CF}. Indeed, the category $\Cal E$ in \cite{CF} is equivalent to the category $\Cal E_2$, and the techniques used in this article are essentially the same as those of \cite{CF}. Recall that a right $R$-module is \textit{uniserial} if the lattice of its submodules is linearly ordered under inclusion. The endomorphism ring $\End(U_R)$ of a non-zero uniserial module $U_R$ \cite[Theorem~1.2]{TAMS} has at most two maximal right ideals: the two-sided completely prime ideals  $I_U:=\{\,f\in \End(U_R) \mid f$ is not  injective$\,\}$ and
$K_U:=\{\,f\in \End(U_R) \mid f$ is not  surjective$\,\}$, or only one of them. Here, a {\em completely prime ideal} $P$ of a ring $S$ is a proper ideal $P$ of~$S$ such that, for every $x,y\in S$, $xy\in P$ implies that either $x\in P$ or $y\in P$. Two right $R$-modules $M_R$ and $N_R$ are said to have the same  {\em monogeny class}, denoted
by $[M_R]_m=[N_R]_m$, if there exist a monomorphism $M_R\rightarrow N_R$ and
a monomorphism $N_R\rightarrow M_R$. Similarly, $M_R$ and $N_R$ are said to have the same {\em epigeny class}, denoted
by $[M_R]_e=[N_R]_e$, if there exist an epimorphism $M_R\rightarrow N_R$ and
an epimorphism $N_R\rightarrow M_R$. For uniserial modules, the monogeny class and the epigeny class can be expressed in terms of the maximal right ideals of the endomorphism rings of the modules. To be more precise, two uniserial right $R$-modules $U$ and $V$ have the same monogeny class [resp. epigeny class] if and only if there exist two morphisms $f: U_R\rightarrow V_R$ and $g: V_R \rightarrow U_R$ such that $gf \notin I_U$ [resp. $gf \notin K_U$] (see Remark \ref{1.1}). Finite direct sums of uniserial modules are classified via their monogeny class and their epigeny class \cite{TAMS}. These two invariants are sufficient thanks to their relation with the maximal right ideals of the endomorphism ring of a uniserial module. Other classes of modules whose endomorphism rings have at most two maximal ideals have similar behaviours \cite{AAF1, Tufan, FG}.

In this paper, we generalize the concept of monogeny class and epigeny class, defining $2n$ classes for the objects of the category $\Cal E_n$ (Section \ref{classes}). We focus our attention to the objects $M$ of $\Cal E_n$ whose factors $M^{(i)}/M^{(i-1)}$ are uniserial right $R$-modules. For such objects, we find a behaviour very similar to that of uniserial modules mentioned in the previous paragraph. The main results of this paper are Theorem \ref{EARY}, in which we describe the endomorphism ring of these objects, and Theorem \ref{main}, in which finite direct sums of objects whose factors $M^{(i)}/M^{(i-1)}$ are uniserial right $R$-modules are classified via the $2n$ classes that generalize the monogeny class and the epigeny class.

In this article, rings are associative rings with identity and modules are right unitary modules. The term ``maximal ideal'' in a ring shall mean ``maximal two-sided ideal'', that is, ``maximal in the set of all proper two-sided ideals''.

\section{Objects with a semilocal endomorphism ring}

Let $R$ be a ring and let $n$ be a fixed positive integer. We consider the category $\Cal E_n$, defined as follows. The objects of $\Cal E_n$ are right $R$-modules $M$ with a fixed chain of submodules
$$
0=M^{(0)}\leq M^{(1)}\leq M^{(2)} \leq \dots \leq M^{(n)}=M.
$$
With abuse of notation, we simply denote by $M$ such an object. A morphism in $\Cal E_n$ between two chains $M$ and $N$ is a right $R$-module morphism $f:M\rightarrow N$ such that $f(M^{(i)})\subseteq N^{(i)}$ for every $i=1,\dots,n$.
We will denote by $E_M$ the endomorphism ring $\End_{\Cal E_n}(M)$ of an object $M$ in the category $\Cal E_n$.

Notice that $\Cal E_n$ is an additive category, whose zero object is given by the zero module with its trivial chain $0^{(i)}=0$ for every $i=1,\dots,n$. It will be denoted by~$0$.

Notice that a morphism $f:M\rightarrow N$ in $\Cal E_n$ induces right $R$-module morphisms on the factors
$$
f_i :\frac{M^{(i)}}{M^{(i-1)}} \longrightarrow \frac{N^{(i)}}{N^{(i-1)}}\quad \mbox{for every } i=1,\dots,n.
$$
We call $M^{(i)}/M^{(i-1)}$ the $i^{th}$\textit{-factor module} of $M$ and $f_i$ the $i^{th}$\textit{-induced morphism}.

We also define the full subcategory $\Cal U_n$ of the category $\Cal E_n$ consisting of the objects whose factor modules are all non-zero uniserial right $R$-modules. Examples of modules having a chain of submodules with all factor modules uniserial are modules of finite composition length, serial modules or more generally, the polyserial modules studied in \cite{Fu}, \cite{FS1} and \cite{FS2}.

\medskip

Since all the results in this paper derive essentially from the fact that the objects we are dealing with have a semilocal endomorphism ring or have an endomorphism ring of finite type, we briefly recall some notions and facts. A ring $R$ is {\em semilocal} if the factor ring $R/J(R)$, where $J(R)$ denotes the Jacobson radical of $R$, is a semisimple artinian ring. A ring $R$ is semilocal if and only if its dual Goldie dimension $\codim(R)$ is finite \cite[Proposition~2.43]{libro}. If $R$ and $S$ are rings, a ring morphism $\varphi \colon R \to S$
is  {\em local} if, for every $r\in R$, $\varphi (r)$ invertible in $S$ implies $r$ invertible in $R$~\cite{CD}.

A ring $R$ {\em has type $m$} \cite{AlbPav4}, if the factor ring $R/J(R)$ is a direct product of $m$ division rings, and that $R$ is a {\em ring of finite type} if it has type $m$ for some integer $m\ge 1$. If a ring $R$ has finite type, then the type $m$ of $R$ coincides with the dual Goldie dimension $\codim(R)$ of $R$ \cite[Proposition~2.43]{libro}. A module $M_R$ has {\em type $m$} ({\em finite type}) if its endomorphism ring $\End(M_R)$ is a ring of type $m$ (of finite type, respectively). In the particular case where $M_R$ is the zero module, its endomorphism ring is the trivial ring with one element. We define such a ring to be of dual Goldie dimension $0$ and we consider it to be a semilocal ring of finite type $0$.

\begin{Prop}\label{semilocal}
Let $M$ be an object of $\Cal E_n$, with factor modules $U^{(i)}=M^{(i)}/M^{(i-1)}$, $i=1,\dots,n$. Then:
\begin{enumerate}
	\item[{\rm (a)}]
	$\codim(E_M)\leq \sum_{i=1}^n \codim(\End(U^{(i)}))$. 
	
	\item[{\rm (b)}]
	If $\End(U^{(i)})$ is semilocal for every
	$i=1\dots,n$, then the endomorphism ring $E_M$ is
	also a semilocal ring.
	
	\item[{\rm (c)}]
	If, for every $i=1,\dots,n$, $U^{(i)}$ is of type
	$m_i$, then the endomorphism ring $E_M$ of $M$ is of
	type $\leq m_1+\dots+m_n$.
\end{enumerate}
In particular, for every object $M$ of $\Cal U_n$, $E_M$ is of finite type.
\end{Prop}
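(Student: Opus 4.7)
The plan is to produce a single local ring homomorphism and let the Camps--Dicks machinery (\cite{CD}, \cite[Chapter~2]{libro}) do the rest. Specifically, I would consider
\[
\varphi\colon E_M\longrightarrow \prod_{i=1}^n \End(U^{(i)}),\qquad f\longmapsto (f_1,\dots,f_n),
\]
sending an endomorphism of the chain to the tuple of its induced morphisms on the factors. It is routine that $\varphi$ is a well-defined ring homomorphism; the entire point of the proof is to show that $\varphi$ is a local morphism in the sense of \cite{CD}.

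To verify locality, I would suppose $\varphi(f)=(f_1,\dots,f_n)$ is invertible in $\prod_i \End(U^{(i)})$, i.e.\ that each $f_i$ is an automorphism of $U^{(i)}$, and then argue by induction on $i$ that the restriction $f|_{M^{(i)}}\colon M^{(i)}\to M^{(i)}$ is an $R$-module automorphism. The case $i=0$ is trivial. For the inductive step, apply the short five lemma to the commutative diagram with exact rows $0\to M^{(i-1)}\to M^{(i)}\to U^{(i)}\to 0$ and vertical maps $f|_{M^{(i-1)}}$, $f|_{M^{(i)}}$, $f_i$: the outer two arrows are isomorphisms by the inductive hypothesis and the assumption, so the middle one is too. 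Taking $i=n$ yields that $f$ is an $R$-module automorphism of $M$; since each $f|_{M^{(i)}}$ is then bijective, $f^{-1}$ also preserves the chain, whence $f^{-1}\in E_M$ and $f$ is invertible in $E_M$.

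With $\varphi$ local, the three items follow at once. Using that $\codim$ and type are additive on finite products of rings, and that a local morphism into a semilocal ring $S$ (resp.\ into a ring of finite type $k$) forces its source to be semilocal with $\codim\le\codim(S)$ (resp.\ of finite type $\le k$), applied to $S=\prod_{i=1}^n\End(U^{(i)})$ one obtains (a), (b) and (c) in turn. The ``In particular'' clause is the case $M\in\Cal U_n$: every $\End(U^{(i)})$ is the endomorphism ring of a nonzero uniserial module, which by \cite[Theorem~1.2]{TAMS} has type at most~$2$, so (c) gives that $E_M$ has finite type (at most $2n$).

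I do not expect any real obstacle. The one subtle point in the locality argument is the double bookkeeping---``$f^{-1}$ exists as an $R$-module map'' versus ``$f^{-1}$ preserves the chain''---which the five-lemma induction handles cleanly; everything after that is a direct invocation of the Camps--Dicks toolkit.
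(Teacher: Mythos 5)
Your proposal is correct and follows essentially the same route as the paper: the same canonical local morphism $\varphi\colon E_M\to\prod_{i=1}^n\End(U^{(i)})$ combined with the Camps--Dicks results on dual Goldie dimension and with the characterization of finite type via local morphisms into products of division rings. The only difference is that you spell out the verification that $\varphi$ is local (via the five-lemma induction on the chain), a detail the paper simply asserts; your argument for it is correct.
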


\begin{proof} There is a canonical ring morphism $\varphi\colon E_M\to\End(U^{(1)})\times\dots\times\End(U^{(n)})$ defined by $f\mapsto (f_1,\dots,f_n)$, which is a local morphism. Now (a) follows from \cite[Corollary 2]{CD}, and (b) follows from the fact that a ring is semilocal if and only if its dual Goldie dimension is finite \cite[Proposition~2.43]{libro}.

For (c), suppose that for every $i=1,\dots,n$, $U^{(i)}$ is of type $m_i$. By \cite[Proposition~2.1]{AlbPav4}, there are local morphisms  $\End(U^{(i)})\to D^{(i)}_1\times\dots\times D^{(i)}_{m_i}$ for suitable division rings $D^{(i)}_{j_i}$, $i=1,\dots,n$, $j_i=1,\dots, m_i$. Composing with $\varphi$, we get a local morphism from $E_M$ into $m_1+\dots+m_n$ division rings. Thus $E_M$ has type $\le m_1+\dots+m_n$ by \cite[Proposition~2.1]{AlbPav4}.

For the last assertion, recall that uniserial modules are of finite type.
\end{proof}

\begin{theorem}\label{EARY} Let $M$ be an object of $\Cal U_n$, with factor modules $U^{(i)}$.
For every $i=1,\dots,n$, set 
$$
I_{M,i,m}:=\{\,f\in E_M \mid f_i\mbox{ is not an injective right }R\mbox{-module morphism}\}
$$
and 
$$
I_{M,i,e}:=\{\,f\in E_M \mid f_i\mbox{ is not a surjective right }R\mbox{-module morphism}\}.
$$
Then $I_{M,1,m},\dots, I_{M,n,m},I_{M,1,e},\dots,I_{M,n,e}$ are $2n$ two-sided completely prime ideals of $E_M$, and
every proper right ideal of $E_M$ and every proper left ideal of $E_M$ is contained in one of these $2n$ ideals of $E_M$.
Moreover, $E_M/J(E_M)$ is isomorphic to the direct product of $k$ division rings $D_1,\dots,D_k$ for some $k$ with $1\leq k\le 2n$, where $\{D_1,\dots,D_k\}\subseteq \{E_M/I_{M,1,m},\dots,E_M/I_{M,n,m},E_M/I_{M,1,e},\dots,E_M/I_{M,n,e}\}$.
\end{theorem}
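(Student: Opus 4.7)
The plan is to exploit the local ring morphism $\varphi\colon E_M\to\prod_{i=1}^n\End(U^{(i)})$, $f\mapsto(f_1,\dots,f_n)$, from the proof of Proposition~\ref{semilocal}, together with the structure of the uniserial endomorphism rings $\End(U^{(i)})$ recalled in the introduction: $I_{U^{(i)}}$ and $K_{U^{(i)}}$ are two-sided completely prime ideals of $\End(U^{(i)})$ and are its (at most two) maximal right ideals.

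First I would check that the $2n$ sets are two-sided completely prime ideals by identifying them as preimages of known completely prime ideals. Writing $\pi_i\colon E_M\to\End(U^{(i)})$ for the $i$-th component of $\varphi$, we have $I_{M,i,m}=\pi_i^{-1}(I_{U^{(i)}})$ and $I_{M,i,e}=\pi_i^{-1}(K_{U^{(i)}})$; since the preimage of a two-sided completely prime ideal under any unital ring morphism is again two-sided completely prime, this is immediate.

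The second, most delicate step is showing that every proper right (and every proper left) ideal is contained in some $I_{M,i,\epsilon}$. Locality of $\varphi$ gives that $f\in E_M$ is a unit if and only if $\varphi(f)$ is a unit in $\prod_i\End(U^{(i)})$, if and only if every $f_i$ is bijective; hence the set of non-units of $E_M$ coincides with $\bigcup_{i,\epsilon}I_{M,i,\epsilon}$. Any proper right ideal lies in some maximal right ideal $\mathfrak M$, which consists of non-units, so $\mathfrak M\subseteq\bigcup_{i,\epsilon}I_{M,i,\epsilon}$. I then invoke prime avoidance for a right ideal contained in a finite union of two-sided completely prime ideals: the usual commutative argument (reduce to a minimal cover, pick $x_j\in\mathfrak M\setminus I_{M,i_j,\epsilon_j}$, form $x_1+x_2x_3\cdots x_k$, and use that each $I_{M,i_j,\epsilon_j}$ is two-sided and completely prime) goes through verbatim. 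I expect this prime-avoidance step to be the main technical obstacle, since one needs care that the cross-term $x_2x_3\cdots x_k$ lands in the appropriate prime ideals in the noncommutative setting, and two-sidedness plus complete primeness are exactly what makes it work. The conclusion is $\mathfrak M\subseteq I_{M,i,\epsilon}$ for some $(i,\epsilon)$; maximality of $\mathfrak M$ and properness of $I_{M,i,\epsilon}$ then force $\mathfrak M=I_{M,i,\epsilon}$. The symmetric argument handles left ideals.

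For the final assertion, Proposition~\ref{semilocal}(b) gives that $E_M$ is semilocal, so it has only finitely many maximal right ideals, and by the previous step these form a set $\{\mathfrak M_1,\dots,\mathfrak M_k\}\subseteq\{I_{M,i,m},I_{M,i,e}\}_{i=1,\dots,n}$ with $1\le k\le 2n$. Each $\mathfrak M_j$ is two-sided and maximal right, so the quotient $E_M/\mathfrak M_j$ has no nontrivial right ideals and is therefore a division ring $D_j$. The $\mathfrak M_j$ are distinct maximal right ideals, hence pairwise coprime two-sided ideals, so by the Chinese Remainder Theorem $J(E_M)=\bigcap_j\mathfrak M_j$ yields $E_M/J(E_M)\cong D_1\times\cdots\times D_k$ with each $D_j\cong E_M/I_{M,i,\epsilon}$ for the appropriate $(i,\epsilon)$, completing the proof.
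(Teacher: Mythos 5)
Your proposal is correct and follows essentially the same route as the paper: the local morphism $\varphi\colon E_M\to\prod_i\End(U^{(i)})$, identification of the $2n$ sets as preimages (equivalently, kernels of maps onto domains) of the completely prime ideals $I_{U^{(i)}}$ and $K_{U^{(i)}}$, locality to identify the non-units of $E_M$ with the union of these ideals, prime avoidance for the containment statement, and the Chinese Remainder Theorem for the structure of $E_M/J(E_M)$. The only difference is cosmetic: you spell out the noncommutative prime-avoidance argument (correctly noting that two-sidedness and complete primeness make the cross-term argument work) where the paper simply cites the Prime Avoidance Lemma, and you obtain the division-ring quotients from maximality rather than from the paper's three-case analysis of $\End(U^{(i)})$.
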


\begin{proof} As in the proof of Proposition~\ref{semilocal}, there is a canonical local morphism $\varphi\colon E_M\to\End(U^{(1)})\times\dots\times\End(U^{(n)})$. Since the modules $U^{(i)}$ are all uniserial modules, the canonical projections define local morphisms $\End(U^{(i)})\to \End(U^{(i)})/I_{U^{(i)}}\times \End(U^{(i)})/K_{U^{(i)}}$, where $\End(U^{(i)})/I_{U^{(i)}}$ and $\End(U^{(i)})/K_{U^{(i)}}$ are integral domains. More precisely, for every $i=1,\dots,n$ three cases can occur:

(1) $I_{U^{(i)}}\subseteq K_{U^{(i)}}$. In this case, $\End(U^{(i)})$ is a local ring with maximal ideal $K_{U^{(i)}}$, $\End(U^{(i)})/K_{U^{(i)}}$ is a division ring, and the canonical projection $$
\End(U^{(i)})\to\End(U^{(i)})/ K_{U^{(i)}}
$$
is a local morphism. 

(2) $K_{U^{(i)}}\subseteq I_{U^{(i)}}$. In this case, $\End(U^{(i)})$ is a local ring with maximal ideal $I_{U^{(i)}}$, $\End(U^{(i)})/I_{U^{(i)}}$ is a division ring, and the canonical projection
$$
\End(U^{(i)})\to \End(U^{(i)})/I_{U^{(i)}}
$$
is a local morphism. 

(3) $I_{U^{(i)}}\nsubseteq K_{U^{(i)}}$ and $K_{U^{(i)}}\nsubseteq I_{U^{(i)}}$. In this case, $\End(U^{(i)})$ has two maximal right ideals $I_{U^{(i)}}$ and $K_{U^{(i)}}$, which are two-sided ideals, $\End(U^{(i)})/I_{U^{(i)}}$ and $\End({U^{(i)}})/K_{U^{(i)}}$ are two division rings, and the canonical map
$$
\End(U^{(i)})\to \End(U^{(i)})/I_{U^{(i)}}\times \End(U^{(i)})/K_{U^{(i)}}
$$
is a local morphism. 

Thus, if $D_1,\dots,D_k$ are those $k$ rings between the rings $\End(U^{(i)})/I_{U^{(i)}}$ and $\End(U^{(i)})/K_{U^{(i)}}$ that are division rings, we get a local morphism $E_M\to D_1\times\dots\times D_k$ of $E_M$ into $k\leq 2n$ division rings. The $2n$ ideals $I_{M,i,m}$ and $I_{M,i,e}$ are the kernels of the canonical morphisms $E_M\to \End(U^{(i)})/I_{U^{(i)}}$ and $E_M\to \End(U^{(i)})/K_{U^{(i)}}$, respectively. It is therefore immediate that they are completely prime two-sided ideals of $E_M$.

For every $i=1,\dots,n$, the non-invertible elements of $\End(U^{(i)})$ are exactly the elements of $I_{U^{(i)}}\cup K_{U^{(i)}}$. Since $\varphi\colon E_M\to\End(U^{(1)})\times\dots\times\End(U^{(n)})$ is a local morphism, the non-invertible elements of $E_M$ are exactly the elements of $I_{M,1,m}\cup \dots \cup I_{M,n,m}\cup I_{M,1,e}\cup\dots\cup I_{M,n,e}$. But these $2n$ ideals of $E_M$ are completely prime, so that every proper right (or left) ideal of $E_M$, which necessarily consists of non-invertible elements, must be contained in one of them, by the Prime Avoidance Lemma. In particular, every maximal two-sided ideal of $E_M$ must be one of $I_{M,1,m},\dots, I_{M,n,m},I_{M,1,e},\dots,I_{M,n,e}$. Finally, the last assertion follows by applying the Chinese Remainder Theorem.
\end{proof}

\begin{remark} Notice that, in Theorem~\ref{EARY}, without the assumption $\Ui \neq 0$, the sets $I_{M,i,m}$ and $I_{M,i,e}$ defined in the statement could be empty. To be more precise, for a fixed $i=1,\dots,n$, the sets $I_{M,i,m}$ and $I_{M,i,e}$ are empty exactly when $\Ui=0$. However, under the hypothesis of Theorem~\ref{EARY}, the zero morphism belongs to $I_{M,1,m},\dots, I_{M,n,m},I_{M,1,e},\dots,I_{M,n,e}$ and these $2n$ sets are always proper ideals of $E_M$.
\end{remark}

\section{i-th monogeny class and i-th epigeny class}\label{classes}
Let $M$ and $N$ be two objects of $\Cal E_n$ with factor modules $\U{1},\dots,\U{n}$ and $\V{1},\dots, \V{n}$ respectively. For $i=1,\dots,n$, we will say that 
$M$ and $N$ \textit{belong to the same $i^{th}$ monogeny class}, and we will write $[M]_{i,m}=[N]_{i,m}$, if there exist two morphisms $f\colon M\to N$ and $g\colon N\to M$ in the category $\Cal E_n$ such that $f_i:\U{i}\rightarrow \V{i}$ and $g_i:\V{i}\rightarrow \U{i}$ are injective right $R$-module morphisms. Similarly, we will say that
$M$ and $N$ \textit{belong to the same $i^{th}$ epigeny class}, and write $[M]_{i,e}=[N]_{i,e}$ if there exist two morphisms $f\colon M\to N$ and $g\colon N\to M$ in the category $\Cal E_n$ such that $f_i:\U{i}\rightarrow \V{i}$ and $g_i:\V{i}\rightarrow \U{i}$ are surjective right $R$-module morphisms.

Notice that, in this notation, $[M]_{i,m}=[0]_{i,m}$ if and only if $[M]_{i,e}=[0]_{i,e}$, if and only if $\Ui$ is the zero module.

\begin{example}\label{monoepi}
Let $M$ and $N$ be two objects of $\Cal E_n$ with factor modules $\U{1},\dots,$ $\U{n}$ and $\V{1},\dots, \V{n}$ respectively. If $[M]_{i,m}=[N]_{i,m}$ for some $i=1,\dots,n$, then $[\Ui]_m=[\Vi]_m$, that is, $\Ui$ and $\Vi$ belong to the same monogeny class in the classical sense of \cite{TAMS}. Similarly, if $[M]_{i,e}=[N]_{i,e}$ for some $i=1,\dots,n$, then $[\Ui]_e=[\Vi]_e$. Now we will see an example in which the converse holds as well, that is, if $[\Ui]_m=[\Vi]_m$ for some $i=1,\dots,n$, then $[M]_{i,m}=[N]_{i,m}$, and if $[\Ui]_e=[\Vi]_e$ for some $i=1,\dots,n$, then $[M]_{i,e}=[N]_{i,e}$. Let $M_1,\dots,M_n$ and $N_1,\dots,N_n$ be right $R$-modules. Let $M$ and $N$ be the objects of $\Cal E_n$ defined as
$$
0< M_1 < M_1 \oplus M_2 < \dots < M_1 \oplus \dots \oplus M_n=M
$$
and
$$
0< N_1 <N_1 \oplus N_2 < \dots < \dots < N_1 \oplus \dots  \oplus N_n=N
$$
respectively. Then the $i^{th}$-factor module $\Ui$ of $M$ is isomorphic to $M_i$ and the $i^{th}$-factor module $\Vi$ of $N$ is isomorphic to $N_i$. So, if $[\Ui]_m=[\Vi]_m$ for some $i=1,\dots,n$, then there exist two injective right $R$-module morphisms $\varphi:M_i\rightarrow N_i$ and $\psi:N_i\rightarrow M_i$ that can be extended trivially on all the other direct summands of $M$ and $N$, in order to get two morphisms $f:M\rightarrow N$ and $g:N\rightarrow M$ in $\Cal E_n$ such that both the induced morphisms $f_i$ and $g_i$ are right $R$-module monomorphisms. Hence $[M]_{i,m}=[N]_{i,m}$. Similarly, if $[\Ui]_e=[\Vi]_e$, then $[M]_{i,e}=[N]_{i,e}$.
\end{example}
 
\begin{remark}\label{1.1} Let $M$ and $N$ be two objects of $\Cal U_n$ with factor modules $\U{1},\dots,$ $\U{n}$ and $\V{1},\dots, \V{n}$ respectively. For $a=m,e$, by \cite[Lemma~1.1]{TAMS}, $[M]_{i,a}=[N]_{i,a}$ if and only if there exist two morphisms $f\colon M\to N$ and $g\colon N \to M$ in the category $\Cal U_n$ such that $g f \notin I_{M,i,a}$ (or, equivalently, such that $f g \notin I_{N,i,a}$).\end{remark}
 
\begin{Lemma} \label{4.2} Let $M$ and $N$ be two objects of $\Cal U_n$. Fix $i=1,\dots,n$ and $a=m,e$ and assume that $[M]_{i,a}=[N]_{i,a}$. Then the following properties hold:
\begin{enumerate}
\item[{\rm (a)}]
For every $j=1,\dots,n$ and $b=m,e$, one has that $I_{M,j,b}\subseteq I_{M,i,a}$ if and only if $I_{N,j,b}\subseteq I_{N,i,a}$.

\item[{\rm (b)}]
For every $j=1,\dots,n$ and $b=m,e$, $I_{M,j,b}\subseteq I_{M,i,a}$ implies $[M]_{j,b}=[N]_{j,b}$.

\end{enumerate}
\end{Lemma}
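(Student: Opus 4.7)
The strategy is to handle part~(b) in a couple of lines from the ideal containment and Remark~\ref{1.1}, and then to prove part~(a) by a transfer argument that sandwiches an element $h\in I_{N,j,b}$ through morphisms $f,g$ into $I_{M,j,b}$, exploiting the uniseriality of the $j$-th factor module of $N$.

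For part~(b), Remark~\ref{1.1} applied to $[M]_{i,a}=[N]_{i,a}$ provides morphisms $f\colon M\to N$ and $g\colon N\to M$ with $gf\notin I_{M,i,a}$. The containment $I_{M,j,b}\subseteq I_{M,i,a}$ forces $gf\notin I_{M,j,b}$, and Remark~\ref{1.1} in the reverse direction then yields $[M]_{j,b}=[N]_{j,b}$.

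For part~(a), by the symmetry of the statement in $M$ and $N$ it suffices to show $I_{M,j,b}\subseteq I_{M,i,a}$ implies $I_{N,j,b}\subseteq I_{N,i,a}$. I would fix $f,g$ realizing $[M]_{i,a}=[N]_{i,a}$ in its original form, so that $f_i$ and $g_i$ both have property $a$; the same argument as in part~(b) applied to this pair shows that $g_jf_j$ has property $b$ as well, hence $f_j$ is injective if $b=m$ and $g_j$ is surjective if $b=e$. For $h\in I_{N,j,b}$, set $\psi:=ghf\in E_M$. The key step is to show $\psi\in I_{M,j,b}$: when $b=m$, the submodules $\ker h_j$ and $\operatorname{Im} f_j$ of the uniserial module $V^{(j)}$ are both nonzero, so linear ordering of the submodule lattice makes them comparable and their intersection nonzero, which lifts to a nonzero kernel element of $\psi_j$; when $b=e$, both $\operatorname{Im} h_j$ and $\ker g_j$ are proper submodules of $V^{(j)}$ (the second because $U^{(j)}\neq 0$ and $g_j$ is surjective), so comparability gives $\operatorname{Im} h_j+\ker g_j\subsetneq V^{(j)}$, whence the submodule correspondence under $g_j$ yields $g_j(\operatorname{Im} h_j)\subsetneq U^{(j)}$ and therefore $\operatorname{Im}\psi_j\subsetneq U^{(j)}$.

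Once $\psi\in I_{M,j,b}$ is in hand, the hypothesis places $\psi\in I_{M,i,a}$, so $\psi_i=g_ih_if_i$ fails property $a$; since $f_i$ and $g_i$ have property $a$, a short diagram chase transfers this failure directly to $h_i$ (when $a=m$, injectivity of $g_i$ and $f_i$ produces a nonzero element of $\ker h_i$; when $a=e$, surjectivity of $f_i$ and $g_i$ gives $\operatorname{Im}\psi_i=g_i(\operatorname{Im} h_i)$, so $h_i$ surjective would make $\psi_i$ surjective), hence $h\in I_{N,i,a}$. The principal obstacle is the uniserial step: the linear ordering of the submodule lattice of $V^{(j)}$ is what forces a local defect of $h_j$ to propagate to the sandwich $g_jh_jf_j$, and without it one cannot exclude that $\ker h_j$ avoids $\operatorname{Im} f_j$ in the injective case, or that $g_j$ carries $\operatorname{Im} h_j$ onto all of $U^{(j)}$ in the surjective case.
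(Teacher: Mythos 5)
Your proof is correct and follows essentially the same route as the paper: part (b) is identical, and part (a) rests on the same key construction of sandwiching an endomorphism of $N$ between the morphisms $f,g$ witnessing $[M]_{i,a}=[N]_{i,a}$ and tracking membership in the ideals $I_{-,j,b}$ and $I_{-,i,a}$. The only (cosmetic) differences are that the paper argues (a) by contraposition with a witness $\varphi\in I_{N,j,b}\setminus I_{N,i,a}$ rather than directly, and delegates your explicit uniseriality/comparability argument for $\psi_j=g_jh_jf_j$ to a citation of \cite[Lemma~1.1]{TAMS}.
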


\begin{proof}
{\rm (a)}
It suffices to show that if $j=1,\dots,n$, $b=m,e$ and $I_{M,j,b}\subseteq I_{M,i,a}$, then $I_{N,j,b}\subseteq I_{N,i,a}$. Fix $j=1,\dots,n$ and $b=m,e$ and suppose that $I_{N,j,b}\nsubseteq  I_{N,i,a}$. Let $\varphi\colon N\to N$ be a morphism in $I_{N,j,b}$ not in $I_{N,i,a}$. Since $[M]_{i,a}=[N]_{i,a}$, there exist two morphisms $f\colon M\to N$ and $g\colon N \to M$ in the category $\Cal U_n$ such that $g f \notin I_{M,i,a}$. In particular, by \cite[Lemma~1.1]{TAMS}, $g\varphi f\in I_{M,j,b}$ and $g\varphi f \notin  I_{M,i,a}$, which implies that $I_{M,j,b}\nsubseteq  I_{M,i,a}$.

{\rm (b)} Suppose that $I_{M,j,b}\subseteq I_{M,i,a}$ for some $j=1,\dots,n$ and $b=m,e$. Since $[M]_{i,a}=[N]_{i,a}$, there exist two morphisms $f\colon M\to N$ and $g\colon N \to M$ in the category $\Cal U_n$ such that $g f \notin I_{M,i,a}$. In particular $g f \notin I_{M,j,b}$, and therefore $[M]_{j,b}=[N]_{j,b}$ by Remark \ref{1.1}.

\end{proof}

\begin{corollary}\label{4.3}
Let $M$ and $N$ be two objects of $\Cal U_n$. Suppose that $[M]_{i,a}=[N]_{i,a}$ for every $i=1,\dots,n$ and every $a=m,e$. Consider the sets $\Cal S_M:=\{I_{M,i,a}\mid i=1,\dots,n\mbox{ and }a=m,e\}$ and $\Cal S_N:=\{I_{N,i,a}\mid i=1,\dots,n\mbox{ and }a=m,e\}$ partially ordered by set inclusion. Then the canonical mapping $\Phi\colon \Cal S_M\to \Cal S_N$, defined by 
$I_{M,i,a}\mapsto I_{N,i,a}$, is a partially ordered set isomorphism.
\end{corollary}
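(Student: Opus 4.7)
The plan is to deduce everything from Lemma~\ref{4.2}(a). That lemma, under the hypothesis $[M]_{i,a}=[N]_{i,a}$, supplies the biconditional $I_{M,j,b}\subseteq I_{M,i,a}\Leftrightarrow I_{N,j,b}\subseteq I_{N,i,a}$ for every $(j,b)$. Since the global hypothesis of Corollary~\ref{4.3} gives us the class equality for \emph{every} $(i,a)$, this biconditional is available with both pairs ranging freely over the $2n$ admissible choices, and the rest is bookkeeping.

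The first substantive step is to verify that $\Phi$ is well defined. Because $\Cal S_M$ is a \emph{set} of ideals, distinct indexing pairs may name the same element, so one must check that $I_{M,i,a}=I_{M,j,b}$ forces $I_{N,i,a}=I_{N,j,b}$. I would split the equality into the two inclusions $I_{M,j,b}\subseteq I_{M,i,a}$ and $I_{M,i,a}\subseteq I_{M,j,b}$, then apply Lemma~\ref{4.2}(a) to the former with outer pair $(i,a)$ and to the latter with outer pair $(j,b)$; both applications are legitimate by the blanket hypothesis, and together they yield the corresponding inclusions, and hence the equality, in $\Cal S_N$.

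Exactly the same biconditional shows that $\Phi$ is simultaneously order-preserving and order-reflecting, so injectivity is immediate: if $\Phi(I_{M,i,a})=\Phi(I_{M,j,b})$, the two $N$-inclusions transfer back by the same lemma and force $I_{M,i,a}=I_{M,j,b}$. Surjectivity is trivial, since every element of $\Cal S_N$ is of the form $I_{N,i,a}=\Phi(I_{M,i,a})$. I expect no step to be a serious obstacle; the only subtlety is the well-definedness check, and that check is precisely the reason the hypothesis must be imposed for \emph{all} pairs $(i,a)$ rather than a single fixed one. The corollary is, in essence, the observation that the uniform family of biconditionals from Lemma~\ref{4.2}(a) assembles into a single order isomorphism $\Cal S_M\cong\Cal S_N$.
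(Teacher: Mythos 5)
Your proposal is correct and follows the same route as the paper, which simply states that the corollary ``immediately follows from Lemma~\ref{4.2}(a)''; you have merely spelled out the bookkeeping (well-definedness, order preservation and reflection, injectivity, surjectivity) that the paper leaves implicit. In particular, your observation that well-definedness is the one point requiring the hypothesis for \emph{all} pairs $(i,a)$ is exactly the right reading of why the corollary is stated with that blanket assumption.
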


\begin{proof}
It immediately follows from Lemma \ref{4.2} {\rm (a)}.
\end{proof}

\section{Factor categories and finite direct sums}

Let $\Cal C$ be any preadditive category. An {\em ideal} $\Cal I$ of $\Cal C$ assigns to every pair $A,B$ of objects of $\Cal C$ a subgroup $\Cal I(A,B)$ of the abelian group $\Hom_{\Cal C}(A,B)$ with the property that, for all morphisms $\varphi\colon C\rightarrow A$, $\psi\colon A\rightarrow B$ and $\omega\colon B\rightarrow D$ with $\psi \in \Cal I(A,B)$, one has that $\omega\psi\varphi \in \Cal I(C,D)$ \cite[p.~18]{Several}.

Let $A$ be an object of $\Cal C$ and $I$ be a two-sided ideal of the ring $\End_{\Cal C}(A)$. Let $\Cal I$ be the ideal of the category $\Cal C$ defined as follows. A morphism $f \colon X \to Y$ in $\Cal C$ belongs to $\Cal I(X,Y)$ if $\beta f \alpha \in I$ for every pair of morphisms $\alpha \colon
A \to X$ and $\beta \colon Y \to A$ in~$\Cal C$. The ideal $\Cal I$ is called the {\em ideal of $\Cal C$ associated to}~$I$ \cite{AlbPav3, AlbPav4}. It is the greatest of the ideals $\Cal I'$ of $\Cal C$ with $\Cal I'(A,A)\subseteq I$. It is easily seen that $\Cal I(A,A) = I$. 
If $A$ is an object of $\Cal C$, the ideals associated to two distinct ideals of $\End_{\Cal C}(A)$ are obviously two distinct ideals of the category~$\Cal C$.

For any ideal $\Cal I$ of $\Cal E_n$, $F \colon \Cal E_n \to \Cal E_n / \Cal I$ will denote the canonical functor. Recall that the factor category $\Cal E_n/\Cal I$ has the same objects as $\Cal E_n$ and, for $M,N\in\Ob(\Cal E_n)=\Ob(\Cal E_n/\Cal I)$, the group of morphisms $M\to N$ in $\Cal E_n/\Cal I$ is defined to be the factor group $\Hom_{\Cal E_n}(M,N)/\Cal I(M,N)$.

For any object $M$ of $\Cal E_n$, if $I_{M,i,a}$ is a maximal right ideal of $E_M$, we denote by $\Cal I_{M,i,a}$ the ideal of $\Cal E_n$ associated to $I_{M,i,a}$. Set $V(M):=\{\, \Cal I_{M,i,a}\mid I_{M,i,a}$ is a maximal ideal of $E_M\,\}$, so that $V(M)$ has at most $2n$ elements (Theorem~\ref{EARY}).

\begin{Lemma}\label{star} Let $M$ be an object of $\Cal U_n$, $I_{M,i,a}$ be a maximal ideal of $E_M$, $\Cal I_{M,i,a}$ its associated ideal in $\Cal E_n$ and $F \colon \Cal E_n \to \Cal E_n/\Cal I_{M,i,a}$ be the canonical functor. 
\begin{enumerate}
\item[{\rm (a)}]
For any non-zero object $N$ of $\Cal E_n$ such that $E_N$ is a semilocal ring, either $\Cal I_{M,i,a}(N,N) = E_N$
or $\Cal I_{M,i,a}(N,N)$ is a maximal ideal of $E_N$. In this second case, the ideal of $\Cal E_n$ associated to $\Cal I_{M,i,a}(N,N)$ is equal to $\Cal I_{M,i,a}$.
\item[{\rm (b)}]
For any object $N$ of $\Cal E_n$ such that $E_N$ is of finite type, either $F(N) = 0$ or $F(N) \cong  F(M)$. Moreover, if $[M]_{i,a}=[N]_{i,a}$, then $F(N) \cong  F(M)$.
\item[{\rm (c)}]
If $F(M)^t\cong F(M)^s$ in the factor category $\Cal E_n/\Cal I_{M,i,a}$ for integers $t,s\ge 0$, then $t=s$.
\end{enumerate}
\end{Lemma}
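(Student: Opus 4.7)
The plan is to prove (a), (b), (c) in order: (a) by ideal-theoretic analysis of $T:=E_{M\oplus N}$, exploiting the division-ring corner $D:=E_M/I_{M,i,a}$; (b) by a retract argument in the factor category; and (c) by Wedderburn--Artin applied to $M_t(D)$.

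For (a), set $\tilde I := \Cal I_{M,i,a}(M\oplus N, M\oplus N)$, a two-sided ideal of $T$; its Peirce decomposition with respect to the idempotents $e_M, e_N$ yields $e_M\tilde I e_M = I_{M,i,a}$, $e_N\tilde I e_N = \Cal I_{M,i,a}(N,N) =: J_N$, with off-diagonal pieces $\Cal I_{M,i,a}(N,M)$ and $\Cal I_{M,i,a}(M,N)$. Since $D$ is a division ring (Theorem~\ref{EARY}) sitting in a corner of $T/\tilde I$, the Peirce analysis of two-sided ideals in a ring with a division-ring corner---essentially the argument used in \cite{CF} for $n=2$---forces $J_N$ to be either $E_N$ or a maximal two-sided ideal of $E_N$. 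For the second assertion in (a), let $\Cal J$ be any ideal of $\Cal E_n$ with $\Cal J(N,N)\subseteq J_N$. If $\Cal J(M,M)\not\subseteq I_{M,i,a}$, then by maximality $\Cal J(M,M)=E_M$, so $1_M\in\Cal J(M,M)$; whiskering forces $\alpha\beta\in\Cal J(N,N)\subseteq J_N$ for every $\alpha: M\to N$, $\beta: N\to M$. Unpacking the defining condition with $\alpha'=\alpha$, $\beta'=\beta$ gives $(\beta\alpha)^2\in I_{M,i,a}$, hence $\beta\alpha\in I_{M,i,a}$ by complete primeness; specialising $\beta'=\beta f$ in $\beta'\alpha\beta\alpha'\in I_{M,i,a}$ for arbitrary $f\in E_N$ then yields $\beta f\alpha\in I_{M,i,a}$ for every $\alpha, \beta, f$, i.e., $1_N\in J_N$ and $J_N=E_N$, a contradiction. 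Hence $\Cal J(M,M)\subseteq I_{M,i,a}$, and $\Cal J\subseteq \Cal I_{M,i,a}$ by the maximality characterisation of associated ideals.

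For (b), $F(N)=0$ iff $1_N\in J_N$ iff $J_N=E_N$, so if $F(N)\neq 0$, then $J_N$ is proper and hence maximal by (a); since $E_N$ is of finite type, Theorem~\ref{EARY} identifies $E_N/J_N$ with a division ring. Choose $f\in E_N\setminus J_N$, giving $\alpha: M\to N$ and $\beta: N\to M$ with $\beta f\alpha\notin I_{M,i,a}$; the division-ring structure of $D$ furnishes $\gamma\in E_M$ with $\gamma\beta f\alpha\equiv 1_M\pmod{I_{M,i,a}}$, so $F(\gamma\beta)\circ F(f\alpha)=1_{F(M)}$ in the factor category and $F(M)$ is a retract of $F(N)$. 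The complementary composite $F(f\alpha)\circ F(\gamma\beta)$ is an idempotent in the division ring $E_N/J_N$; were it $0$, the identity $F(\gamma\beta)=F(\gamma\beta)\circ F(f\alpha)\circ F(\gamma\beta)$ would force $F(\gamma\beta)=0$, whence $1_{F(M)}=F(\gamma\beta)\circ F(f\alpha)=0$, contradicting $F(M)\neq 0$. Therefore $F(f\alpha)\circ F(\gamma\beta)=1_{F(N)}$ and $F(M)\cong F(N)$. For the ``moreover'', $[M]_{i,a}=[N]_{i,a}$ provides (by Remark~\ref{1.1}) morphisms $f, g$ with $gf\notin I_{M,i,a}$, so $g\cdot 1_N\cdot f\notin I_{M,i,a}$ shows $1_N\notin J_N$, and the preceding argument applies.

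For (c), the canonical inclusions $\iota_k: M\hookrightarrow M^t$ and projections $\pi_l: M^t\twoheadrightarrow M$ identify $E_{M^t}$ with $M_t(E_M)$, and testing the definition of $\Cal I_{M,i,a}(M^t,M^t)$ against these maps (the matrix entries of any $f$ are $\pi_l\circ f\circ\iota_k$) shows that this ideal corresponds to $M_t(I_{M,i,a})$. Hence $\End_{\Cal E_n/\Cal I_{M,i,a}}(F(M)^t)\cong M_t(D)$ is simple artinian, and any isomorphism $F(M)^t\cong F(M)^s$ induces a ring isomorphism $M_t(D)\cong M_s(D)$, forcing $t=s$ by Wedderburn--Artin uniqueness. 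The main obstacle is the maximality claim in (a): leveraging the division-ring corner of $T/\tilde I$ to control the $e_N$-corner requires careful Peirce-decomposition bookkeeping, which I would carry out following the template of the $n=2$ argument in \cite{CF}.
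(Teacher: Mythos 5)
Parts (b) and (c) of your proposal are essentially sound. Your (b) is the same retraction argument the paper uses, just phrased at greater length (the paper notes that $\beta\alpha\notin I_{M,i,a}$ gives $(\beta\alpha)^2=\beta(\alpha\beta)\alpha\notin I_{M,i,a}$ by complete primeness, hence $\alpha\beta\notin\Cal I_{M,i,a}(N,N)$, so both composites are units in division rings). Your (c) identifies $\End_{\Cal E_n/\Cal I_{M,i,a}}(F(M)^t)$ with $M_t(D)$ and invokes Wedderburn--Artin, whereas the paper applies $\Hom(F(M),-)$ to land in $\Mod D$ and compares dimensions of $D^t$ and $D^s$; both routes work, and yours is a legitimate alternative.

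The gap is in (a). The central claim --- that $\Cal I_{M,i,a}(N,N)$ is either $E_N$ or a maximal ideal of $E_N$ --- is precisely the step you do not carry out: you assert that a ``Peirce analysis \dots forces'' it and explicitly defer the bookkeeping to the end of your write-up, so as written there is no proof of the statement that everything else in the lemma (and in Corollaries \ref{4.6} and \ref{cor4.8}) rests on. The paper gets (a) in one line as an instance of \cite[Lemma~2.1(ii)]{FacPer1}, which is proved for an arbitrary preadditive category; if you want to avoid the citation you must actually do the corner computation. Separately, your argument for the ``associated ideal'' assertion opens with a non sequitur: from $\Cal J(M,M)\nsubseteq I_{M,i,a}$ and maximality of $I_{M,i,a}$ you conclude $\Cal J(M,M)=E_M$, but maximality only yields $\Cal J(M,M)+I_{M,i,a}=E_M$, and by Theorem~\ref{EARY} the ring $E_M$ may have up to $2n$ maximal ideals, so a two-sided ideal not contained in $I_{M,i,a}$ need not be improper. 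The step is repairable without that claim: if $\Cal J(N,N)\subseteq\Cal I_{M,i,a}(N,N)$ and $w\in\Cal J(M,M)\setminus I_{M,i,a}$, choose $\alpha\colon M\to N$ and $\beta\colon N\to M$ with $\beta\alpha\notin I_{M,i,a}$ (possible because $\Cal I_{M,i,a}(N,N)$ is proper in the case at hand); then $\alpha w\beta\in\Cal J(N,N)\subseteq\Cal I_{M,i,a}(N,N)$, so $(\beta\alpha)w(\beta\alpha)=\beta(\alpha w\beta)\alpha\in I_{M,i,a}$, contradicting complete primeness of $I_{M,i,a}$. Hence $\Cal J(M,M)\subseteq I_{M,i,a}$ and $\Cal J\subseteq\Cal I_{M,i,a}$, which is what the second assertion of (a) requires.
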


\begin{proof} 
{\rm (a)} is an immediate consequence of \cite[Lemma~2.1(ii)]{FacPer1}.
 
{\rm (b)} From the previous part, we know that either $F(N) = 0$ or the endomorphism ring of $F(N)$ is a division ring. Let us consider the latter case. 
As $1_N\notin\Cal I_{M,i,a}(N,N)$, there are $\alpha \colon M \to N$ and $\beta \colon N \to M$ such that $\beta \alpha \notin I_{M,i,a}$. Thus $\beta(\alpha\beta)\alpha\notin I_{M,i,a}$. It follows that $\alpha\beta \notin \Cal I_{M,i,a}(N,N)$, 
Therefore $F(\beta)F(\alpha)$  and $F(\alpha)F(\beta)$ are automorphisms, so $F(M) \cong  F(N)$. For the last assertion, note that, also in the case $[M]_{i,a}=[N]_{i,a}$, there are morphisms $\alpha \colon M \to N$ and $\beta \colon N \to M$ such that $\beta \alpha \notin I_{M,i,a}$. So, as before, we can conclude that $F(M)\cong F(N)$.

{\rm (c)} Let $D$ be the endomorphism ring of the object $F(M)$ of $\Cal E_n/\Cal I_{M,i,a}$. Apply the functor $\Hom(F(M),-)\colon \Cal E_n/\Cal I_{M,i,a}\to\Mod D$.
\end{proof}

\begin{corollary}\label{4.6}
Let $M$ be an object of $\Cal U_n$, $I_{M,i,a}$ be a maximal ideal of $E_M$, $\Cal I_{M,i,a}$ its associated ideal in $\Cal E_n$ and $F \colon \Cal E_n \to \Cal E_n/\Cal I_{M,i,a}$ be the canonical functor. 
{\rm (1)} For any object $N$ of $\Cal U_n$, the following conditions are equivalent:

\begin{enumerate}
\item[{\rm (a)}]
$F(N)= 0$ in $\Cal E_n/\Cal I_{M,i,a}$.
\item[{\rm (b)}]
$\Cal I_{M,i,a}(N,N)=E_N$.
\item[{\rm (c)}]
$I_{N,i,a}\subsetneq \Cal I_{M,i,a}(N,N)$.
\item[{\rm (d)}]
$[M]_{i,a}\neq[N]_{i,a}$.
\end{enumerate}
{\rm (2)} For any object $N$ of $\Cal U_n$, the following conditions are equivalent:
\begin{enumerate}
\item[{\rm (a)}]
$F(M)\cong F(N)$ in $\Cal E_n/\Cal I_{M,i,a}$.
\item[{\rm (b)}]
$\Cal I_{M,i,a}(N,N)$ is a proper ideal of $E_N$.
\item[{\rm (c)}]
$I_{N,i,a}= \Cal I_{M,i,a}(N,N)$.
\item[{\rm (d)}]
$[M]_{i,a}=[N]_{i,a}$.
\end{enumerate}

Moreover, in this second case, $I_{N,i,a}$ is a maximal right ideal of $E_N$ and $\Cal I_{M,i,a}=\Cal I_{N,i,a}$.

\end{corollary}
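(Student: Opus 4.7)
The plan is to prove Part~(1) in full, then derive Part~(2) by observing that its four conditions are the negations of those in Part~(1), and finally read off the ``moreover'' clause from Lemma~\ref{star}. The one non-formal ingredient is a routine check that $I_{N,i,a}\subseteq\Cal I_{M,i,a}(N,N)$: for $f\in I_{N,i,a}$ and any $\alpha\colon M\to N$, $\beta\colon N\to M$, the $i$-th induced morphism $(\beta f\alpha)_i=\beta_i f_i\alpha_i$ factors through $f_i$ and so fails to be injective (resp.\ surjective) whenever $f_i$ does, giving $\beta f\alpha\in I_{M,i,a}$. Note also that $I_{N,i,a}$ is a proper ideal of $E_N$, because $\Vi\neq 0$ (as $N\in\Cal U_n$).

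For Part~(1), (a)$\Leftrightarrow$(b) is immediate, since $F(N)=0$ iff $1_N\in\Cal I_{M,i,a}(N,N)$. For (a)$\Leftrightarrow$(d), if $[M]_{i,a}=[N]_{i,a}$ then Lemma~\ref{star}(b) yields $F(N)\cong F(M)\neq 0$, the latter because $\Cal I_{M,i,a}(M,M)=I_{M,i,a}$ is proper; conversely, if $F(N)\neq 0$ then Lemma~\ref{star}(b) gives $F(M)\cong F(N)$, realized by some $\alpha,\beta$ with $F(\beta\alpha)=1_{F(M)}$, whence $\beta\alpha\notin I_{M,i,a}$ and Remark~\ref{1.1} yields $[M]_{i,a}=[N]_{i,a}$. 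The implication (b)$\Rightarrow$(c) is clear from the preliminary containment and properness. For (c)$\Rightarrow$(d) I argue contrapositively: if $[M]_{i,a}=[N]_{i,a}$, pick $\alpha,\beta$ so that $\alpha_i,\beta_i$ are both injective (or both surjective); then for any $f\in\Cal I_{M,i,a}(N,N)$, the relation $\beta f\alpha\in I_{M,i,a}$ forces $\beta_i f_i\alpha_i$ to fail injectivity (or surjectivity), and since $\alpha_i,\beta_i$ already have the relevant property, the failure must come from $f_i$, i.e.\ $f\in I_{N,i,a}$; combined with the preliminary containment this gives $\Cal I_{M,i,a}(N,N)=I_{N,i,a}$, contradicting the strict inclusion (c).

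Part~(2) is then immediate by negation: conditions (b), (c), (d) of Part~(2) are the logical negations of (b), (c), (d) of Part~(1) (using the preliminary containment to interpret (c) as $I_{N,i,a}=\Cal I_{M,i,a}(N,N)$ versus strict inclusion), and for (a) the fact that $F(M)\neq 0$ together with Lemma~\ref{star}(b) gives $F(M)\cong F(N)\Leftrightarrow F(N)\neq 0$, the negation of Part~(1)(a). For the ``moreover'' clause, the conditions of Part~(2) force $F(N)\neq 0$, so Lemma~\ref{star}(b) applies and $\End(F(N))=E_N/\Cal I_{M,i,a}(N,N)=E_N/I_{N,i,a}$ is a division ring, making $I_{N,i,a}$ a maximal right ideal of $E_N$; finally, Lemma~\ref{star}(a) identifies the ideal of $\Cal E_n$ associated to $\Cal I_{M,i,a}(N,N)=I_{N,i,a}$ with $\Cal I_{M,i,a}$, and by construction this associated ideal is $\Cal I_{N,i,a}$, so $\Cal I_{M,i,a}=\Cal I_{N,i,a}$. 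The only step that is not purely formal is (c)$\Rightarrow$(d) of Part~(1), where the explicit witnesses of $[M]_{i,a}=[N]_{i,a}$ must be used to pin $\Cal I_{M,i,a}(N,N)$ down exactly to $I_{N,i,a}$.
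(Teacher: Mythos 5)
Your proof is correct and follows essentially the same route as the paper's: the containment $I_{N,i,a}\subseteq\Cal I_{M,i,a}(N,N)$, Lemma~\ref{star}, Remark~\ref{1.1}, and the reduction of Part~(2) to the negations of Part~(1); your rerouting of the implications (proving (a)$\Leftrightarrow$(d) via Lemma~\ref{star}(b) and handling (c)$\Rightarrow$(d) contrapositively, which in passing pins down $\Cal I_{M,i,a}(N,N)=I_{N,i,a}$) is only a cosmetic variation. One caveat: your justification of the preliminary containment --- that $\beta_i f_i\alpha_i$ ``factors through $f_i$ and so fails to be injective (resp.\ surjective) whenever $f_i$ does'' --- is not a formal fact about compositions (it is false for general modules, e.g.\ a non-injective $f_i$ precomposed and postcomposed with suitable maps can yield an injective composite); it holds here only because the factor modules are uniserial, which is precisely the content of \cite[Lemma~1.1]{TAMS} that the paper cites at this step, so you should invoke that lemma (or the uniseriality argument) explicitly rather than calling the check routine.
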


\begin{proof} (1).
(a)${}\Leftrightarrow{}$(b) $F(N)= 0$ if and only if the endomorphism ring $E_N/\Cal I_{M,i,a}(N,N)$ of $F(N)$ in the factor category $\Cal E_n/ \Cal I_{M,i,a}$ is the zero ring.

(b)${}\Rightarrow{}$(c) It suffices to note that $I_{N,i,a}$ is always a proper ideal of $E_N$.

(c)${}\Rightarrow{}$(d) Let $\varphi$ be an element in $\Cal I_{M,i,a}(N,N)$ not in $I_{N,i,a}$. For any two morphisms $f\colon M\to N$ and $g\colon N\to M$ in the category $\Cal E_n$, one has $g\varphi f \in I_{M,i,a}$, by definition of associated ideal. Since $\varphi \notin I_{N,i,a}$, it follows from \cite[Lemma~1.1]{TAMS} that $gf \in I_{M,i,a}$. This means that $[M]_{i,a}\neq[N]_{i,a}$.

(d)${}\Rightarrow{}$(b)  By Remark \ref{1.1}, if $[M]_{i,a}\ne[N]_{i,a}$, then, for any morphism $f\colon M\to N$ and $g\colon N \to M$ in the category $\Cal U$, we have that $g1_N f=g f \in I_{M,i,a}$.
Therefore $1_N \in \Cal I_{M,i,a}(N,N)$, so that $\Cal I_{M,i,a}(N,N)=E_N$.

(2).
First notice that, by Lemma \ref{star} {\rm (b)}, either $F(M)=0$ or $F(M)\cong F(N)$ in $\Cal E_n/\Cal I_{M,i,a}$. Moreover, $I_{N,i,a}$ is always contained in $\Cal I_{M,i,a}(N,N)$. As a matter of fact, suppose $\varphi \in I_{N,i,a}$. By \cite[Lemma~1.1]{TAMS}, for any $f \colon M\to N$ and any $g \colon N\to M$, we have that $g\varphi f \in I_{M,i,a}$, so $\varphi \in I_{M,i,a}(N,N)$. Now all the implications follow from part (1).

For the last assertion, apply \cite[Lemma~2.1(ii)]{FacPer1}.
\end{proof}

\begin{corollary}\label{cor4.8}
Let $M$ and $N$ be two objects in the category $\Cal U_n$. Fix $i=1,\dots,n$ and $a=m,e$. Suppose that $[M]_{i,a}=[N]_{i,a}$. Then the following properties hold:

\item[{\rm (a)}]
$I_{M,i,a}$ is a maximal right ideal of $E_M$  if and only if $I_{N,i,a}$ is a maximal right ideal of $E_N$.

\item[{\rm (b)}]
Suppose that $I_{M,i,a}$ is a maximal right ideal of $E_M$. Then, for every $j=1,\dots,n$ and $b=m,e$, $I_{M,j,b}= I_{M,i,a}$ if and only if $I_{N,j,b}= I_{N,i,a}$.
\end{corollary}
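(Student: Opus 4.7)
The plan is to treat (a) and (b) as essentially bookkeeping consequences of results already proved in the paper. Part (a) is immediate from the ``moreover'' clause of Corollary~\ref{4.6}(2), while part (b) is a clean two-step application of Lemma~\ref{4.2}.

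For (a), assume $I_{M,i,a}$ is a maximal right ideal of $E_M$. The equality $[M]_{i,a}=[N]_{i,a}$ is exactly condition (d) of Corollary~\ref{4.6}(2), so the ``moreover'' clause tells us that $I_{N,i,a}$ is a maximal right ideal of $E_N$. The converse follows by exchanging the roles of $M$ and $N$, since the hypothesis $[M]_{i,a}=[N]_{i,a}$ is symmetric.

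For (b), by the same symmetry it suffices to show the implication $I_{M,j,b}=I_{M,i,a}\Rightarrow I_{N,j,b}=I_{N,i,a}$. From the hypothesised equality, both inclusions $I_{M,j,b}\subseteq I_{M,i,a}$ and $I_{M,i,a}\subseteq I_{M,j,b}$ hold. Applying Lemma~\ref{4.2}(a) to the first inclusion (with fixed pair $(i,a)$) yields $I_{N,j,b}\subseteq I_{N,i,a}$, and Lemma~\ref{4.2}(b) applied to that same inclusion provides the crucial additional fact $[M]_{j,b}=[N]_{j,b}$. This new equality enables a second application of Lemma~\ref{4.2}(a), this time with $(j,b)$ playing the role of the fixed pair, so that the inclusion $I_{M,i,a}\subseteq I_{M,j,b}$ translates into $I_{N,i,a}\subseteq I_{N,j,b}$. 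Combining the two inclusions gives $I_{N,j,b}=I_{N,i,a}$.

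The one step that requires genuine care is recognising that Lemma~\ref{4.2}(b) is designed precisely to unlock a second use of Lemma~\ref{4.2}(a) with the roles of the two index pairs interchanged; the maximal-right-ideal assumption in (b) enters only indirectly, through part (a), and plays no essential role in the chain of implications above.
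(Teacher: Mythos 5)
Your proof is correct and follows essentially the same route as the paper: part (a) via the ``moreover'' clause of Corollary~\ref{4.6}(2) (which is where the paper also gets maximality of $I_{N,i,a}$, via Lemma~\ref{star}(a)), and part (b) by the same two-step application of Lemma~\ref{4.2}, using (b) of that lemma to transfer $[M]_{j,b}=[N]_{j,b}$ and then reapplying (a) with the index pairs interchanged. Your closing observation that the maximality hypothesis is not actually used in the chain of implications for (b) is accurate and matches the structure of the paper's argument.
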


\begin{proof}
{\rm (a)} It suffices to show that $I_{M,i,a}$ maximal implies $I_{N,i,a}$ maximal. From Corollary \ref{4.6} (2), $I_{N,i,a}=\Cal I_{M,i,a}(N,N)$ is a proper ideal of $E_N$, and it is a maximal right ideal of $E_N$ by Lemma \ref{star} {\rm (a)}.

{\rm (b)} By {\rm (a)}, the hypotheses on $M$ and $N$ are symmetrical. Therefore it suffices to show that $I_{M,j,b}= I_{M,i,a}$ implies $I_{N,j,b}= I_{N,i,a}$. The inclusion $I_{N,j,b}\subseteq I_{N,i,a}$ follows from Lemma \ref{4.2} {\rm (a)}. Moreover, by Lemma \ref{4.2} {\rm (b)}, we can interchange the role of $(i,a)$ and $(j,b)$ and deduce the opposite inclusion applying Lemma \ref{4.2} {\rm (a)} again.
\end{proof}

\begin{remark}\label{sumsemi}
Let $M$ and $N$ be two objects of $\Cal U_n$ with factor modules $\U{1}, \dots,$ $\U{n}$ and $\V{1},\dots,\V{n}$ respectively. The $i^{th}$-factor module of $M\oplus N$ is isomorphic to $\Ui \oplus \Vi$ and $\codim(\End(\Ui \oplus \Vi))=\codim(\End(\Ui))+\codim(\End(\Vi))$ \cite[Introduction, p. 575] {FacHer}. By Proposition \ref{semilocal} (a), $\codim(E_{M \oplus N})$ is finite, and therefore $E_{M\oplus N}$ is semilocal \cite[Proposition~2.43]{libro}.
\end{remark}

\begin{Lemma}\label{twosided}
Let $M_1,\dots,M_r$ be objects of $\Cal U_n$. Then every maximal two-sided ideal of $E_{\bigoplus_{k=1}^r M_k}$ is of the form
$\Cal I_{M_h,i,a}(\bigoplus_{k=1}^r M_k,\bigoplus_{k=1}^r M_k)$
for some $h=1,\dots,r$, $i=1,\dots,n$ and $a=m,e$ such that $I_{M_h,i,a}$ is a maximal right ideal of $E_{M_h}$. Conversely, if $(h,i,a)$ is a triple such that $h=1,\dots,r$, $i=1,\dots,n$, $a=m,e$ and $I_{M_h,i,a}$ is a maximal right ideal of $E_{M_h}$, then $\Cal I_{M_h,i,a}(\bigoplus_{k=1}^r M_k,\bigoplus_{k=1}^r M_k)$ is a maximal two-sided ideal of $E_{\bigoplus_{k=1}^r M_k}$.
\end{Lemma}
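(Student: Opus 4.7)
The plan is to set $M:=\bigoplus_{k=1}^r M_k$, to note that $E_M$ is semilocal (by Remark~\ref{sumsemi} applied inductively, or equivalently by Proposition~\ref{semilocal}~(a) together with the fact, used in that remark, that a finite direct sum of uniserial modules has semilocal endomorphism ring) and that each $E_{M_k}$ is of finite type (Proposition~\ref{semilocal}), and then to prove the converse (``if'') direction first, so that it is available when closing the forward direction.

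For the converse, I fix $(h,i,a)$ with $I_{M_h,i,a}$ a maximal right ideal of $E_{M_h}$. Such an ideal is in particular a maximal two-sided ideal of $E_{M_h}$, so Lemma~\ref{star}~(a)---applied with $M_h\in\Cal U_n$ playing the role of ``$M$'' there and the non-zero semilocal object $M$ playing the role of ``$N$''---tells me that $\Cal I_{M_h,i,a}(M,M)$ is either all of $E_M$ or a maximal ideal of $E_M$. To exclude the first possibility, I will use the canonical inclusion $\iota_h\colon M_h\to M$ and projection $\pi_h\colon M\to M_h$: the composition $\pi_h\,1_M\,\iota_h=1_{M_h}\notin I_{M_h,i,a}$, which forces $1_M\notin\Cal I_{M_h,i,a}(M,M)$, so that this ideal is proper and hence a maximal two-sided ideal of $E_M$.

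For the forward direction, I take an arbitrary maximal two-sided ideal $\Cal P$ of $E_M$, let $\Cal P'$ be its associated ideal in $\Cal E_n$, and consider the canonical functor $F\colon\Cal E_n\to\Cal E_n/\Cal P'$. Since $\End_{\Cal E_n/\Cal P'}(F(M))=E_M/\Cal P\neq 0$, the object $F(M)$ is non-zero; additivity of $F$ together with $M=\bigoplus_k M_k$ then forces $F(M_h)\neq 0$ for some index $h$, equivalently $\Cal P'(M_h,M_h)$ is a proper two-sided ideal of $E_{M_h}$. Since $E_{M_h}$ is semilocal this proper ideal is contained in some maximal two-sided ideal of $E_{M_h}$, which by Theorem~\ref{EARY} is $I_{M_h,i,a}$ for some $(i,a)$; the characterization in Theorem~\ref{EARY}---every proper right ideal of $E_{M_h}$ is contained in one of the $I_{M_h,j,b}$'s---then forces any such maximal two-sided $I_{M_h,i,a}$ to be automatically a maximal right ideal as well.

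To conclude $\Cal P=\Cal I_{M_h,i,a}(M,M)$, I plan to observe that for every $\varphi\in\Cal P=\Cal P'(M,M)$ and every $\alpha\colon M_h\to M$, $\beta\colon M\to M_h$, the ``ideal of a category'' property of $\Cal P'$ gives $\beta\varphi\alpha\in\Cal P'(M_h,M_h)\subseteq I_{M_h,i,a}$, so by the definition of the associated ideal $\varphi\in\Cal I_{M_h,i,a}(M,M)$; hence $\Cal P\subseteq\Cal I_{M_h,i,a}(M,M)$, and the converse direction (already proved) says the right-hand side is itself a maximal two-sided ideal of $E_M$, so equality holds. The main obstacle I expect is not a deep one but a bookkeeping one: correctly tracking the equivalence between ``maximal right'' and ``maximal two-sided'' ideals among the $I_{M_h,j,b}$'s (guaranteed by Theorem~\ref{EARY}) so that Lemma~\ref{star}~(a) can be invoked in the converse, and applying the composition property of $\Cal P'$ at the right moment in the forward step to pass from $\Cal P'(M,M)$ to $\Cal P'(M_h,M_h)$.
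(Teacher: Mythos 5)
Your proof is correct, and while its skeleton matches the paper's, both halves close differently. For the forward direction you and the paper both pass to the associated ideal $\Cal P'$ and use (in effect) $1_{\oplus M_k}=\sum_h\varepsilon_h\pi_h$ to find a summand $M_h$ with $\Cal P'(M_h,M_h)$ proper; but the paper then invokes \cite[Lemma~2.1]{FacPer1} to get the \emph{equality} $\Cal P'(M_h,M_h)=I_{M_h,i,a}$ and to identify $\Cal P'$ with $\Cal I_{M_h,i,a}$, whereas you settle for the containment $\Cal P'(M_h,M_h)\subseteq I_{M_h,i,a}$ and recover equality of the ideals of $E_{\oplus M_k}$ from the maximality of $\Cal P$ together with the properness of $\Cal I_{M_h,i,a}(\oplus M_k,\oplus M_k)$ — which is why your order of proof (converse first) is not just stylistic but load-bearing. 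For the converse, the paper computes $F(\oplus_k M_k)\cong F(M_h)^m$ in the factor category via Corollary~\ref{4.6}, identifies $\End_{\Cal Q}(\oplus_k M_k)$ with a matrix ring over a division ring, and reads off maximality of the kernel; you instead apply Lemma~\ref{star}~(a) directly to the non-zero semilocal object $N=\oplus_k M_k$ and rule out $\Cal I_{M_h,i,a}(N,N)=E_N$ by testing against $\pi_h 1_N\varepsilon_h=1_{M_h}\notin I_{M_h,i,a}$. Your route is shorter and avoids the explicit matrix-ring computation (at the cost of leaning on Lemma~\ref{star}~(a), which itself rests on the same \cite{FacPer1} machinery); the paper's computation has the side benefit of exhibiting the simple artinian quotient explicitly, which is reused in the proof of Proposition~\ref{parziale}. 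Your bookkeeping point that, by Theorem~\ref{EARY}, the $I_{M_h,j,b}$ that are maximal two-sided ideals are exactly those that are maximal right ideals is needed and correctly handled.
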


\begin{proof}
First, let $I$ be a maximal two-sided ideal of $E_{\oplus_{k=1}^r M_k}$ and let $\Cal I$ be its associated ideal on $\Cal E_n$. Using \cite[Lemma~2.1(ii)]{FacPer1}, we get that for any $M=M_1,\dots,M_t$, either $\Cal I(M,M)=E_M$ or $\Cal I(M,M)$ is a maximal two-sided ideal of $E_M$. If $\Cal I(M_h,M_h)=E_{M_h}$ for all $h=1,\dots,t$, by definition of associated ideal, it follows that $\varepsilon_h \pi_h \in I$ for every $h=1,\dots,t$, where $\varepsilon_h : M_h\rightarrow \oplus_{k=1}^r M_k$ and $\pi_h: \oplus_{k=1}^r M_k \rightarrow M_h$ are the canonical embedding and the canonical projection respectively. In particular $1_{\oplus_{k=1}^r M_k}=\sum_{h=1}^r \varepsilon_h \pi_h \in I$, which is absurd. It follows that there exists a triple $(h,i,a)$ such that $\Cal I(M_h,M_h)=I_{M_h,i,a}$ is a maximal right ideal of $E_{M_h}$. By \cite[Lemma~2.1(i)]{FacPer1}, $\Cal I_{M_h,i,a}=\Cal I$, so that, in particular, $I=\Cal I(\bigoplus_{k=1}^r M_k,\bigoplus_{k=1}^r M_k)=\Cal I_{M_h,i,a}(\bigoplus_{k=1}^r M_k,\bigoplus_{k=1}^r M_k)$.

Conversely, let $(h,i,a)$ be a triple such that $h=1,\dots,t$, $i=1,\dots,n$, $a=m,e$ and $I_{M_h,i,a}$ is a maximal right ideal of $E_{M_h}$. Denote by $\Cal Q$ the quotient category $\Cal E_n/\Cal I_{M_h,i,a}$ and consider the canonical functor $F:\Cal E_n\rightarrow \Cal Q$. Then $F(\bigoplus_{k=1}^r M_k)\cong F(M_h)^m$, where $m:=m_{h,i,a}=|\{k\mid k=1,\dots,r,\ [M_k]_{i,a}=[M_h]_{i,a}\}|$ (Corollary \ref{4.6}).
It follows that $\End_{\Cal Q}(\bigoplus_{k=1}^r M_k)\cong M_{m}(D)$, where $D=\End_{\Cal Q}(M_h)$ is a division ring. In particular, $\End_{\Cal Q}(\bigoplus_{k=1}^r M_k)$ is a simple artinian ring and so the kernel of $E_{\bigoplus_{k=1}^r M_k}\rightarrow \End_{\Cal Q}(\bigoplus_{k=1}^r M_k)$, which is $\Cal I_{M_h,i,a}(\bigoplus_{k=1}^r M_k,\bigoplus_{k=1}^r M_k)$, is a maximal two-sided ideal of $E_{\bigoplus_{k=1}^r M_k}$.

\end{proof}

\begin{corollary} Let $M_1,\dots,M_r$ be objects of $\Cal U_n$. Then there is a one-to-one correspondence between $\bigcup_{k=1}^r V(M_k)$ and the maximal two-sided ideals of $\bigoplus_{k=1}^r M_k$ given by
$$
\Psi: \Cal I_{M_k,i,a}\mapsto \Cal I_{M_k,i,a}(\oplus_{k=1}^r M_k,\oplus_{k=1}^r M_k).
$$
\end{corollary}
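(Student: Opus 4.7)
The plan is to read this as essentially a repackaging of Lemma~\ref{twosided}, so the only real work is unpacking what ``bijection'' means and verifying that $\Psi$ is well-defined as a function on the union $\bigcup_{k=1}^r V(M_k)$, despite the fact that the same ideal of $\Cal E_n$ may be represented by several triples $(k,i,a)$.

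First I would check that $\Psi$ is well-defined and takes values in the set of maximal two-sided ideals of $E_{\bigoplus_{k=1}^r M_k}$. The elements of $\bigcup_{k=1}^r V(M_k)$ are ideals of the category $\Cal E_n$, and $\Psi$ is simply the evaluation map $\Cal J\mapsto \Cal J(\bigoplus_{k=1}^r M_k,\bigoplus_{k=1}^r M_k)$. If such a $\Cal J$ is represented as $\Cal I_{M_k,i,a}$ with $I_{M_k,i,a}$ a maximal right ideal of $E_{M_k}$, then the converse direction of Lemma~\ref{twosided} tells us that $\Psi(\Cal J)$ is indeed a maximal two-sided ideal of $E_{\bigoplus_{k=1}^r M_k}$, so the codomain is correct. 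Surjectivity of $\Psi$ is then precisely the first assertion of Lemma~\ref{twosided}.

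For injectivity I would argue as follows. Suppose $\Psi(\Cal I_{M_k,i,a})=\Psi(\Cal I_{M_{k'},i',a'})=:I$ for two indices in the union. By Lemma~\ref{twosided}, $I$ is a maximal two-sided ideal of $E_{\bigoplus_{k=1}^r M_k}$, and the proof of Lemma~\ref{twosided} shows (via \cite[Lemma~2.1(i)]{FacPer1}) that whenever such an $I$ is written as $\Cal I_{M_h,j,b}(\bigoplus_{k=1}^r M_k,\bigoplus_{k=1}^r M_k)$ with $I_{M_h,j,b}$ a maximal right ideal of $E_{M_h}$, the associated ideal $\Cal I_{M_h,j,b}$ is uniquely determined by $I$: it coincides with the ideal of $\Cal E_n$ associated to $I$ viewed as a two-sided ideal of $\End_{\Cal E_n}(\bigoplus_{k=1}^r M_k)$. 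Applying this observation to both $(k,i,a)$ and $(k',i',a')$ forces $\Cal I_{M_k,i,a}=\Cal I_{M_{k'},i',a'}$, proving injectivity.

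There is no substantive obstacle here beyond keeping track of the distinction between triples and the actual ideals of $\Cal E_n$ they index: the corollary is purely a bookkeeping translation of Lemma~\ref{twosided}, with injectivity rooted in the general fact that the passage from a two-sided ideal of $\End_{\Cal C}(A)$ to its associated ideal of $\Cal C$ is injective. The only point worth emphasising in writing is that distinct triples may perfectly well yield the same element of $\bigcup_{k=1}^r V(M_k)$ (exactly when the associated ideals of $\Cal E_n$ coincide), and in that case $\Psi$ trivially assigns them the same image, so the bijection statement is unambiguous.
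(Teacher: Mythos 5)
Your proposal is correct and follows essentially the same route as the paper: well-definedness and surjectivity come from Lemma~\ref{twosided}, and the identification of $\Cal I_{M_k,i,a}$ with the ideal of $\Cal E_n$ associated to its evaluation at $\bigoplus_{k=1}^r M_k$ (via \cite[Lemma~2.1(i)]{FacPer1}, equivalently Lemma~\ref{star}(a)) does the rest. The only cosmetic difference is that the paper packages this last point as an explicit construction of the inverse map $I\mapsto\Cal I$, whereas you phrase it as an injectivity check.
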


\begin{proof}
By Lemma \ref{twosided}, $\Psi$ is well defined. We can construct the inverse map of $\Psi$ as follows. Let $I$ be a maximal two-sided ideal of $E_{\oplus_{k=1}^r M_k}$ and let $\Cal I$ be its associated ideal on $\Cal E_n$. As in the proof of Lemma \ref{twosided}, there exists a triple $(h,i,a)$ such that $\Cal I(M_h,M_h)=I_{M_h,i,a}$ is a maximal right ideal of $E_{M_h}$ and $\Cal I_{M_h,i,a}=\Cal I$. So, $\Cal I \in \bigcup_{k=1}^r V(M_k)$ and $I\mapsto \Cal I$ is the inverse of $\Psi$ (apply \cite[Lemma~2.1(i)]{FacPer1}).
\end{proof}

In the proof of the next result, we will make use of some techniques, notations and ideas taken from \cite{DF}.

\begin{Prop}\label{parziale}
Let $M_1,M_2\dots,M_r,N_1,N_2,\dots,N_s$ be $r+s$ objects of $\Cal U_n$. Then $\bigoplus_{k=1}^r M_k \cong \bigoplus_{l=1}^s N_l$ in the category $\Cal E_n$ if and only if $r=s$ and there exist $2n$ permutations $\varphi_{i,a}$ of $\{1,2,\dots,r\}$, where $i=1,\dots,n$ and $a=m,e$, such that $[M_k]_{i,a}=[N_{\varphi_{i,a}(k)}]_{i,a}$ for every $k=1,\dots,r$.
\end{Prop}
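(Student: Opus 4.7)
The plan is to prove both implications by extracting invariants via the factor-category constructions from earlier in the paper and then invoking a Krull--Schmidt-type argument of the form developed in~\cite{DF}.

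For the forward direction, assume $\bigoplus_{k=1}^r M_k \cong \bigoplus_{l=1}^s N_l$ in $\Cal E_n$ and fix $(i,a) \in \{1,\dots,n\} \times \{m,e\}$. I will establish the multiset equality $\{[M_k]_{i,a}\}_k = \{[N_l]_{i,a}\}_l$; the permutation $\varphi_{i,a}$ and the equality $r = s$ then follow by elementary combinatorics. Pick $h$ such that $I_{M_h,i,a}$ is a maximal right ideal of $E_{M_h}$ and apply the canonical functor $F \colon \Cal E_n \to \Cal E_n / \Cal I_{M_h,i,a}$. Using the additivity of $F$, Lemma~\ref{star}(b), and Corollary~\ref{4.6}, one gets $F(\bigoplus_k M_k) \cong F(M_h)^{\mu}$ and $F(\bigoplus_l N_l) \cong F(M_h)^{\nu}$, where $\mu = |\{k \mid [M_k]_{i,a} = [M_h]_{i,a}\}|$ and $\nu$ is defined analogously on the $N$ side. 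Since $\bigoplus M_k \cong \bigoplus N_l$, Lemma~\ref{star}(c) forces $\mu = \nu$. The case in which $I_{M_h,i,a}$ is not maximal in $E_{M_h}$ is handled by picking a maximal $(j,b)$ with $I_{M_h,i,a} \subseteq I_{M_h,j,b}$ (such a pair exists by Theorem~\ref{EARY}) and using Lemma~\ref{4.2}(b) together with Corollary~\ref{cor4.8} to descend the $(j,b)$-matching already obtained to the $(i,a)$-level; the crucial point is that Corollary~\ref{cor4.8}(b) propagates the inclusion $I_{\cdot,i,a} \subseteq I_{\cdot,j,b}$ across each $(j,b)$-class, so that the $(i,a)$-class is constant on it.

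For the backward direction, set $M = \bigoplus_k M_k$ and $N = \bigoplus_l N_l$, and suppose the $2n$ permutations exist. By Remark~\ref{sumsemi}, both $E_M$ and $E_N$ are semilocal. The Corollary following Lemma~\ref{twosided} catalogues the maximal two-sided ideals of $E_M$ and $E_N$ via $\bigcup_k V(M_k)$ and $\bigcup_l V(N_l)$, respectively. The permutation data, combined with Corollary~\ref{4.6}(2), puts these catalogues in bijection $\Cal I_{M_k,i,a} \mapsto \Cal I_{N_{\varphi_{i,a}(k)},i,a}$, and the multiplicities $m_{h,i,a}$ appearing in the proof of Lemma~\ref{twosided} coincide on the two sides. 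Invoking a weak Krull--Schmidt argument of the kind developed in~\cite{DF}, one then concludes $M \cong N$ in $\Cal E_n$: two direct sums of objects of $\Cal U_n$ whose factor-category profiles coincide at every $\Cal I_{M_h,i,a}$ must themselves be isomorphic.

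The main obstacle is the backward direction: turning $2n$ a priori independent permutations into a single isomorphism in $\Cal E_n$ cannot be done by simply pairing summands, and requires the full exchange/lifting technology from~\cite{DF}. In the forward direction the only delicate point is the transfer via Lemma~\ref{4.2}(b) when $I_{M_h,i,a}$ fails to be maximal in $E_{M_h}$, which is otherwise routine.
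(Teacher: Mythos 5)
Your forward direction is sound only where $I_{M_h,i,a}$ is a maximal ideal of $E_{M_h}$: there the counting $F(\bigoplus_k M_k)\cong F(M_h)^{\mu}$, $F(\bigoplus_l N_l)\cong F(M_h)^{\nu}$ and Lemma~\ref{star}(c) do give $\mu=\nu$. The reduction of the non-maximal case to this one, however, has a genuine gap. Choosing a maximal $(j,b)$ with $I_{M_h,i,a}\subseteq I_{M_h,j,b}$, Lemma~\ref{4.2} does show that this $(j,b)$-class of $M_h$ is balanced and lies inside a single $(i,a)$-class; but the $(i,a)$-class of $M_h$ is then only \emph{covered} by such balanced blocks, arising from possibly different maximal pairs $(j,b)$ chosen for different members, and these blocks can overlap. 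To partition the $(i,a)$-class into balanced blocks you would need the up-set $\{(j,b): I_{Y,i,a}\subseteq I_{Y,j,b}\}$ (and the maximality of those $I_{Y,j,b}$) to be constant as $Y$ ranges over the $(i,a)$-class; Lemma~\ref{4.2}(a) controls only the down-set $\{(j,b): I_{Y,j,b}\subseteq I_{Y,i,a}\}$, and Corollary~\ref{cor4.8} requires the very class equalities you are trying to establish. The implication you want is in fact true, but the paper never proves it by descent: it proves the ``only if'' part for \emph{every} pair $(i,a)$ uniformly, by building a bipartite digraph on $\{M_1,\dots,M_r\}\cup\{N_1,\dots,N_r\}$ whose edges record surjectivity (resp.\ injectivity) of the induced maps $\chi^{(i)}_{h,j}$ on the $i$-th factors, verifying the Hall-type condition $|T|\le|N^+(T)|$ via uniseriality and dual Goldie dimension, and applying \cite[Lemma~2.1]{DF}. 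That argument never mentions maximality, which is exactly why it closes the case your reduction cannot reach.

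In the backward direction the setup (catalogue of maximal two-sided ideals via Lemma~\ref{twosided}, matching multiplicities $m_{h,i,a}$) is right, but the concluding sentence --- ``two direct sums whose factor-category profiles coincide at every $\Cal I_{M_h,i,a}$ must themselves be isomorphic'' --- is precisely the statement to be proved, and it is not a theorem of \cite{DF} (that reference supplies the digraph lemma for the forward direction, not lifting technology). The paper's actual argument is: for each triple $(M,i,a)$ in a suitable set $\Omega$ produce a morphism $f_{M,i,a}$ that becomes an isomorphism in $\Cal E_n/\Cal I_{M,i,a}$; use Chinese-Remainder elements $\delta_{M,i,a}$, $\delta'_{M,i,a}$ in the semilocal endomorphism rings to glue these into a single $f=\sum\delta'_{M,i,a}f_{M,i,a}\delta_{M,i,a}$ that is an isomorphism modulo $\Cal I=\bigcap\Cal I_{M,i,a}$ (by \cite[Proposition~5.2]{Adel}); identify $\Cal I$ evaluated at each direct sum with the Jacobson radical of its endomorphism ring; and lift, since elements invertible modulo the radical are invertible. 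Without this gluing-and-lifting step your proof of the ``if'' part is an assertion, not an argument.
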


\begin{proof}
First assume that $\bigoplus_{k=1}^r M_k\cong \bigoplus_{l=1}^s N_l$ in the category $\Cal E_n$. For every $i=1,\dots, n$, $k=1,\dots,r$ and $l=1,\dots, s$, let $\Ui_k$ and $\Vi_l$ denote the $i^{th}$-factor module of $M_k$ and $N_l$ respectively. Since $\bigoplus_{k=1}^r U_k^{(1)}\cong \bigoplus_{l=1}^s V_l^{(1)}$ in the category Mod-$R$, looking at the Goldie dimension, we get that $r=s$.
Let $\alpha: \bigoplus_{k=1}^r M_k \rightarrow \bigoplus_{l=1}^s N_l$ be an isomorphism in $\Cal E_n$ with inverse $\beta: \bigoplus_{l=1}^s N_l \rightarrow \bigoplus_{k=1}^r M_k$. Denote by $\varepsilon_h:M_h\rightarrow \bigoplus_{k=1}^r M_k$, $\pi_h: \bigoplus_{k=1}^r M_k \rightarrow M_h$, $\varepsilon'_j:N_j\rightarrow \bigoplus_{l=1}^s N_l$ and $\pi'_j: \bigoplus_{l=1}^s N_l \rightarrow N_j$ the embeddings and the canonical projections and consider the composite morphisms $\chi_{h,j}:=\pi'_j\alpha \varepsilon_h: M_h\rightarrow N_j$ and $\chi'_{j,h}:=\pi_h\beta\varepsilon'_j:N_j\rightarrow M_h$. Fix $i=1,\dots,n$. We want to prove the existence of the permutation $\varphi_{i,e}$ (dualizing the proof we get the existence of the permutation $\varphi_{i,m}$).
Define a bipartite digraph $D=D(X,Y;E)$ (according on the choice of the pair $(i,e)$) having $X=\{M_1,\dots,M_r\}$ and $Y=\{N_1,\dots,N_r\}$ as disjoint sets of non-adjacent vertices, and the set $E$ of edges defined as follows: one edge from $M_h$ to $N_j$ for each $h$ and $j$ such that the $i^{th}$-induced morphism $\chi^{(i)}_{h,j}: \Ui_h\rightarrow \Vi_j$ is a surjective right $R$-module morphism, and one edge from $N_j$ to $M_h$ for each $h$ and $j$ such that the $i^{th}$-induced morphism $\chi'^{(i)}_{j,h}: \Vi_j\rightarrow \Ui_h$ is a surjective right $R$-module morphism.

We want to show that for every subset $T\subseteq X\cup Y$ of vertices, $|T|\leq |N^+(T)|$, where $N^+(T) :=\{\,w\in V\mid (v,w)\in E$ 
for some $v\in T\,\}$ is the {\em out-neighbourhood} of $T$ (\cite[Introduction, p.184]{DF}). Since the digraph is bipartite, we can suppose that $T\subseteq X$. If $p=|T|$ and $q=|N^+(T)|$, relabeling the indices we may assume that $T=\{M_1,\dots, M_p\}$ and $N^+(T)=\{N_1,\dots,N_q\}$. It means that the induced morphisms $\chi^{(i)}_{h,j}$ are not surjective for every $h=1,\dots, p$  and every $j=q+1,\dots,r$. Since the modules $\Vi_j$ are all uniserial, we have that $L_j:=\bigcup_{h=1}^p \chi^{(i)}_{h,j}(\Ui_h)\subsetneq \Vi_j$ for every $j=q+1,\dots,n$, and therefore all the quotient modules $\Vi_j/L_j$ are non-zero for every $j=q+1,\dots,n$. Let $\pi: \bigoplus_{l=1}^r \Vi_l \rightarrow \bigoplus_{j=q+1}^r \Vi_j/L_j$ be the canonical projection. For every $h=1,\dots, p$ and for every $j=q+1,\dots,r$, the composite morphism
$$
\Ui_h\overset{\varepsilon^{(i)}_h}{\longrightarrow} \bigoplus_{k=1}^r \Ui_k
\overset{\alpha^{(i)}}{\longrightarrow} \bigoplus_{l=1}^r \Vi_l
\overset{\pi'^{(i)}_j}{\longrightarrow} \Vi_j \rightarrow \Vi_j/L_j
$$
is zero because $\pi'^{(i)}_j \alpha^{(i)} \varepsilon^{(i)}_h(\Ui_h)=\chi^{(i)}_{h,j}(\Ui_h)\subseteq L_j$. It follows that for every $h=1,\dots,p$, $\varepsilon^{(i)}_h(\Ui_h)$ is contained in the kernel of
$$
\bigoplus_{k=1}^r \Ui_k
\overset{\alpha^{(i)}}{\longrightarrow} \bigoplus_{l=1}^r \Vi_l
\overset{\pi'^{(i)}_j}{\longrightarrow} \Vi_j \rightarrow \Vi_j/L_j
$$
for every $j=q+1,\dots,r$. Since $\sum_{h=1}^p \varepsilon^{(i)}_h(\Ui_h)=\bigoplus_{h=1}^p \Ui_h$, it follows that there exists a morphism $\bigoplus_{k=1}^r \Ui_k/\bigoplus_{h=1}^p \Ui_h\cong \bigoplus_{m=p+1}^r \Ui_m \rightarrow \Vi_j/L_j$ making the following diagram
$$
\xymatrix{
\bigoplus_{k=1}^r \Ui_k \ar[r] \ar[d]_{\alpha^{(i)}}
 &  \bigoplus_{m=p+1}^r \Ui_m \ar[d] \\
\bigoplus_{l=1}^r \Vi_l \ar[r] & \Vi_j/L_j
}
$$
commute. Hence, there exist a morphism $\gamma: \bigoplus_{m=p+1}^r \Ui_m \rightarrow \bigoplus_{j=q+1}^r \Vi_j/L_j$ and a commutative diagram
$$
\xymatrix{
\bigoplus_{k=1}^r \Ui_k \ar[r] \ar[d]_{\alpha^{(i)}}
 &  \bigoplus_{m=p+1}^r \Ui_m \ar[d]^\gamma \\
\bigoplus_{l=1}^r \Vi_l \ar[r] & \bigoplus_{j=q+1}^r\Vi_j/L_j
}
$$
Since the horizontal arrows are the canonical projections, the morphism $\gamma$ must be surjective. Taking the dual Goldie dimension of the domain and the codomain of $\gamma$, we get $r-|T|\geq r-|N^+(T)|$, that is $|T|\leq |N^+(T)|$. To conclude, it suffices to apply \cite[Lemma~2.1]{DF} to the digraph $D$.

Conversely, assume that there exist $2n$ permutations $\varphi_{i,a}$ of $\{1,2,\dots,r\}$, where $i=1,\dots,n$ and $a=m,e$, such that $[M_k]_{i,a}=[N_{\varphi_{i,a}(k)}]_{i,a}$. Fix $M \in \{M_1,\dots,M_r$, $N_1,\dots, N_r\}$, $i=1,\dots,n$ and $a=m,e$ such that $I_{M,i,a}$ is a maximal right ideal of $E_M$. Consider the canonical functor $F:\Cal E_n\rightarrow \Cal E_n/\Cal I_{M,i,a}$. Then $F(\bigoplus_{k=1}^r M_k)\cong F(M)^m \cong F(\bigoplus_{l=1}^r N_l)$, where $m:=m_{M,i,a}=|\{k\mid k=1,\dots,r,\ [M_k]_{i,a}=[M]_{i,a}\}|=|\{l\mid l=1,\dots,r,\ [N_l]_{i,a}=[M]_{i,a}\}|$ (Corollary \ref{4.6}). It means that there exists $f_{M,i,a}:\bigoplus_{k=1}^r M_k \rightarrow \bigoplus_{l=1}^r N_l$ in $\Cal E_n$ which becomes an isomorphism in the factor category $\Cal E_n/\Cal I_{M,i,a}$.
Notice that the triple $(M,i,a)$ identifies a pair of objects $(M_k,N_{\varphi_{i,a}(k)})$ with the following properties: $[M_k]_{i,a}=[N_{\varphi_{i,a}(k)}]_{i,a}$, $I_{M_k,i,a}$ is a maximal right ideal of $E_{M_k}$, $I_{N_{\varphi_{i,a}(k)},i,a}$ is a maximal right ideal of $E_{N_{\varphi_{i,a}(k)}}$ and $\Cal I_{M_k,i,a}=\Cal I_{N_{\varphi_{i,a}(k)},i,a}$ (Corollary \ref{4.6} and Corollary \ref{cor4.8}). So, according to Lemma \ref{twosided}, the triple $(M,i,a)$ defines a maximal two-sided ideal $J_{M,i,a}$ of $E_{\bigoplus_{k=1}^r M_k}$, namely $J_{M,i,a}=\Cal I_{M,i,a}(\bigoplus_{k=1}^r M_k,\bigoplus_{k=1}^r M_k)$. Since $E_{\bigoplus_{k=1}^r M_k}$ is a semilocal ring, there exists $\delta_{M,i,a}$ such that $\delta_{M,i,a}\equiv 1$ modulo $J_{M,i,a}$ and $\delta_{M,i,a}\equiv 0$ modulo all the other maximal two-sided ideals of $E_{\bigoplus_{k=1}^r M_k}$. Similarly, we can define an endomorphism $\delta'_{M,i,a}$ of $\bigoplus_{l=1}^r N_l$.
Consider any subset $\Omega$ of $\{M_1,\dots,M_r,N_1,\dots,N_r\}\times\{1,\dots,n\}\times\{m,e\}$ consisting of all the triples $(M,i,a)$ such that $I_{M,i,a}$ is a maximal right ideal of $E_M$ with the additional property that if $(M,i,a)\neq(M',j,b)$, then $\Cal I_{M,i,a}\neq \Cal I_{M',j,b}$. So, in particular $\{\Cal I_{M,i,a}\mid (M,i,a)\in \Omega\}=\left(\bigcup_{k=1}^r V(M_k)\right)\cup \left(\bigcup_{l=1}^s V(N_l)\right)$. Define the morphism $f:=\sum_{(M,i,a)\in \Omega} \delta'_{M,i,a}f_{M,i,a}\delta_{M,i,a}$. By construction, $f$ becomes an isomorphism in the factor category $\Cal E_n/\Cal I_{M,i,a}$ for every $(M,i,a)\in \Omega$. By \cite[Proposition 5.2]{Adel}, $f$ is an isomorphism in the category $\Cal E_n/\Cal I$, where $\Cal I$ is the intersection of the finitely many ideals $\Cal I_{M,i,a}$, with $(M,i,a)\in \Omega$. Since $\bigoplus_{k=1}^r M_k$ and $\bigoplus_{l=1}^r N_l$ are isomorphic in the category $\Cal E_n/\Cal I$, there exist two morphisms $\alpha: \bigoplus_{k=1}^r M_k \rightarrow \bigoplus_{l=1}^r N_l$ and $\beta: \bigoplus_{l=1}^r N_l \rightarrow \bigoplus_{k=1}^r M_k$ such that $\beta \alpha \equiv 1_{\bigoplus_{k=1}^r M_k}$ modulo $\Cal I(\bigoplus_{k=1}^r M_k,\bigoplus_{k=1}^r M_k)$ and $\alpha\beta \equiv 1_{\bigoplus_{l=1}^r N_l}$ modulo $\Cal I(\bigoplus_{l=1}^r N_l,\bigoplus_{l=1}^r N_l)$. But $E_{\bigoplus_{k=1}^r M_k}$ is semilocal (Remark \ref{sumsemi}), so its Jacobson radical $J(E_{\bigoplus_{k=1}^r M_k})$ is equal to the intersection of all its maximal two-sided ideals, which are the ideals $\Cal I_{M,i,a}(\bigoplus_{k=1}^r M_k,\bigoplus_{k=1}^r M_k)$, where $(M,i,a) \in \Omega$. It means that
$$
\Cal I(\bigoplus_{k=1}^r M_k,\bigoplus_{k=1}^r M_k)=\bigcap_{(M,i,a)\in \Omega} \Cal I_{M,i,a}(\bigoplus_{k=1}^r M_k,\bigoplus_{k=1}^r M_k)=J(E_{\bigoplus_{k=1}^r M_k}).
$$
Similarly,
$$
\Cal I(\bigoplus_{l=1}^r N_l,\bigoplus_{l=1}^r N_l)=\bigcap_{(M,i,a)\in \Omega} \Cal I_{M,i,a}(\bigoplus_{l=1}^r N_l,\bigoplus_{l=1}^r N_l)=J(E_{\bigoplus_{l=1}^r N_l}).
$$
Thus $\alpha\beta$ and $\beta\alpha$ are invertible in the rings $E_{\bigoplus_{k=1}^r M_k}$ and $E_{\bigoplus_{l=1}^r N_l}$, respectively. In particular, $\alpha$ is both right invertible and left invertible in the category $\Cal E_n$. It follows that $\alpha$ is an isomorphism in $\Cal E_n$.

\end{proof}

\begin{corollary}
Let $M_1,M_2\dots,M_r,N_1,N_2,\dots,N_s$ be $r+s$ objects of $\Cal U_n$. For every $i=1,\dots,n$ and every $a=m,e$, define
$$
X_{i,a}:=\{k\mid k=1,\dots,r,\ I_{M_k,i,a} \mbox{ is a maximal ideal of } E_{M_k}\}
$$
and
$$
Y_{i,a}:=\{l\mid l=1,\dots,s,\ I_{N_l,i,a} \mbox{ is a maximal ideal of } E_{N_l}\}.
$$

Then $\bigoplus_{k=1}^r M_k \cong \bigoplus_{l=1}^s N_l$ in the category $\Cal E_n$ if and only if $r=s$ and there exist $2n$ bijections $\psi_{i,a}:X_{i,a}\rightarrow Y_{i,a}$, where $i=1,\dots,n$ and $a=m,e$, such that $[M_k]_{i,a}=[N_{\psi_{i,a}(k)}]_{i,a}$ for every $k\in X_{i,a}$.
\end{corollary}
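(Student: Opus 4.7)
The strategy is to reduce everything to Proposition~\ref{parziale}, together with the structural information provided by Corollary~\ref{cor4.8}. In the forward direction the bijections $\psi_{i,a}$ arise simply as restrictions of the permutations supplied by Proposition~\ref{parziale}; in the backward direction the proof of Proposition~\ref{parziale} can be rerun with $\psi_{i,a}$ in place of $\varphi_{i,a}$, provided I check that the relevant multiplicities can still be computed.

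Suppose first that $\bigoplus_{k=1}^r M_k\cong\bigoplus_{l=1}^s N_l$ in $\Cal E_n$. Proposition~\ref{parziale} yields $r=s$ and permutations $\varphi_{i,a}$ of $\{1,\dots,r\}$ with $[M_k]_{i,a}=[N_{\varphi_{i,a}(k)}]_{i,a}$ for every $k$. I set $\psi_{i,a}:=\varphi_{i,a}|_{X_{i,a}}$. If $k\in X_{i,a}$, then $I_{M_k,i,a}$ is a maximal two-sided ideal of $E_{M_k}$, and by the arguments surrounding Theorem~\ref{EARY} it is also a maximal right ideal of $E_{M_k}$; Corollary~\ref{cor4.8}(a) applied to $(M_k,N_{\varphi_{i,a}(k)})$ then shows that $I_{N_{\varphi_{i,a}(k)},i,a}$ is a maximal right ideal of $E_{N_{\varphi_{i,a}(k)}}$, hence $\varphi_{i,a}(k)\in Y_{i,a}$. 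Applying the same reasoning to $\varphi_{i,a}^{-1}$, the restriction is a bijection $X_{i,a}\to Y_{i,a}$ as required.

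For the converse, assume $r=s$ and that the bijections $\psi_{i,a}$ exist. I will follow the second half of the proof of Proposition~\ref{parziale} essentially word for word, the only real change being in the initial count. Fix a triple $(M,i,a)$ with $M\in\{M_1,\dots,M_r,N_1,\dots,N_r\}$ and $I_{M,i,a}$ a maximal ideal of $E_M$, and let $F\colon\Cal E_n\to\Cal E_n/\Cal I_{M,i,a}$ be the canonical functor. By Corollary~\ref{4.6}, $F(\bigoplus_k M_k)\cong F(M)^{m_M}$ and $F(\bigoplus_l N_l)\cong F(M)^{m_M'}$, where $m_M$ and $m_M'$ count the indices $k$ and $l$ with $[M_k]_{i,a}=[M]_{i,a}$ and $[N_l]_{i,a}=[M]_{i,a}$ respectively. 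Corollary~\ref{cor4.8}(a) forces any such $k$ to lie in $X_{i,a}$ and any such $l$ to lie in $Y_{i,a}$, so $\psi_{i,a}$ restricts (using transitivity of the $i,a$-class relation, via Remark~\ref{1.1}) to a bijection between $\{k\in X_{i,a}:[M_k]_{i,a}=[M]_{i,a}\}$ and $\{l\in Y_{i,a}:[N_l]_{i,a}=[M]_{i,a}\}$; hence $m_M=m_M'$ and $F(\bigoplus_k M_k)\cong F(\bigoplus_l N_l)$. From here I lift each such isomorphism to a morphism $f_{M,i,a}\colon\bigoplus_k M_k\to\bigoplus_l N_l$, combine these with idempotents $\delta_{M,i,a},\delta'_{M,i,a}$ produced by the Chinese remainder theorem in the semilocal rings $E_{\bigoplus_k M_k}$ and $E_{\bigoplus_l N_l}$ (Remark~\ref{sumsemi} and Lemma~\ref{twosided}), form the sum over a set of triples representing the distinct $\Cal I_{M,i,a}$, and apply \cite[Proposition~5.2]{Adel} plus the identification of each Jacobson radical with the intersection of its maximal two-sided ideals to conclude that the combined morphism is an isomorphism in $\Cal E_n$, exactly as in the proof of Proposition~\ref{parziale}. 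The one genuinely new point, and the main obstacle in passing from Proposition~\ref{parziale} to the present corollary, is precisely this counting observation: Corollary~\ref{cor4.8}(a) confines the indices contributing to $m_M$ and $m_M'$ to $X_{i,a}$ and $Y_{i,a}$, so the information carried by $\psi_{i,a}$ on these subsets is already enough.
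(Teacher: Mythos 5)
Your proof is correct and follows the same route as the paper: the forward direction restricts the permutations from Proposition~\ref{parziale} using Corollary~\ref{cor4.8}(a) to see that $\varphi_{i,a}(X_{i,a})=Y_{i,a}$, and the converse reruns the ``if'' part of Proposition~\ref{parziale}. The paper leaves the converse as a one-line remark; your observation that Corollary~\ref{cor4.8}(a) confines the indices contributing to the multiplicities $m_{M,i,a}$ to $X_{i,a}$ and $Y_{i,a}$ is exactly the detail needed to justify it.
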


\begin{proof}
If $\bigoplus_{k=1}^r M_k\cong \bigoplus_{l=1}^s N_l$ in the category $\Cal E_n$, then we can restrict the permutations $\varphi_{i,a}$ of Proposition \ref{parziale} to $X_{i,a}$, noting that $\varphi_{i,a}(X_{i,a})=Y_{i,a}$ (Corollary \ref{cor4.8}).

For the other implication, observe that the proof of the ``if part'' of Proposition \ref{parziale} works also in this case.
\end{proof}

\begin{corollary}
Let $M$ and $N$ be two objects of $\Cal U_n$. Then $M$ and $N$ are isomorphic in $\Cal U_n$ if and only if $[M]_{i,a}=[N]_{i,a}$ for all $i=1,\dots,n$ and $a=m,e$.
\end{corollary}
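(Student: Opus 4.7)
The plan is to deduce this statement directly from Proposition~\ref{parziale} by specializing to the case $r=s=1$. The only permutation of the singleton $\{1\}$ is the identity, so for a single pair of objects $M_1=M$ and $N_1=N$ the existence of $2n$ permutations $\varphi_{i,a}$ satisfying $[M_k]_{i,a}=[N_{\varphi_{i,a}(k)}]_{i,a}$ collapses to the requirement that $[M]_{i,a}=[N]_{i,a}$ for every $i=1,\dots,n$ and every $a=m,e$.

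For the forward direction, I would assume $M\cong N$ in $\Cal U_n$. Since $\Cal U_n$ is a full subcategory of $\Cal E_n$, the isomorphism lives in $\Cal E_n$ as well, and Proposition~\ref{parziale} applied to the two one-term families $\{M\}$ and $\{N\}$ furnishes the identity permutation $\varphi_{i,a}$ of $\{1\}$, yielding $[M]_{i,a}=[N]_{i,a}$ for all admissible $i,a$. For the converse, if $[M]_{i,a}=[N]_{i,a}$ for all $i$ and $a$, then the identity permutations of $\{1\}$ trivially satisfy the hypotheses of the ``if'' direction of Proposition~\ref{parziale}, so $M\cong N$ in $\Cal E_n$; fullness of $\Cal U_n\subseteq\Cal E_n$ upgrades this to an isomorphism in $\Cal U_n$.

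There is no real obstacle here: the statement is a specialization of the main classification result. The only point requiring a brief remark is the passage from isomorphism in $\Cal E_n$ to isomorphism in $\Cal U_n$, which is handled by the fact that $\Cal U_n$ is a full subcategory of $\Cal E_n$, so the inverse morphism supplied by Proposition~\ref{parziale} automatically lies in $\Cal U_n$ whenever both source and target do.
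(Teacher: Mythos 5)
Your proposal is correct and matches the paper's intent exactly: the corollary is stated there without proof precisely because it is the $r=s=1$ specialization of Proposition~\ref{parziale}, where the only permutation of a singleton is the identity. Your added remark that fullness of $\Cal U_n$ in $\Cal E_n$ transfers the isomorphism back to $\Cal U_n$ is a sensible (if routine) point to make explicit.
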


\begin{example}
Let $r\geq 2$ be an integer. In \cite[Example 2.1]{TAMS}, Alberto Facchini constructed an example of $r^2$ pairwise 
non-isomorphic finitely presented
uniserial modules $U_{j,k}$ ($j,k=1,2,\dots,r$) over a suitable serial ring $R$ in order to show that a module that is a direct sum of $r$ uniserial modules can have $r!$ pairwise non-isomorphic direct-sum decompositions into indecomposables. The modules $U_{j,k}$
satisfy the following properties:
\begin{enumerate}
\item[{\rm (a)}]
for every $j,k,h,l=1,2,\dots,r$,
$[U_{j,k}]_m=[U_{h,l}]_m$ if and only if $j=h$;
\item[{\rm (b)}]
for every $j,k,h,l=1,2,\dots,r$,
$[U_{j,k}]_e=[U_{h,l}]_e$ if and only if $k=l$.\end{enumerate}
(We refer to \cite{TAMS} for the construction of the modules $U_{j,k}$.) Following the spirit of this example and using the modules $U_{j,k}$, we want to show that the permutations $\varphi_{i,a}$ in the statement of Proposition \ref{parziale} can be completely arbitrary. For any choice of $2r$ elements $k_1,\dots,k_{2r}$ of $\{1,\dots,r\}$, define the following object $M_{k_1,\dots,k_{2r}}$ of $\Cal U_n$:
$$
0<U_{k_1,k_2}<U_{k_1,k_2}\oplus U_{k_3,k_4}< \dots <
U_{k_1,k_2}\oplus \dots \oplus U_{k_{2r-1},k_{2r}}=M_{k_1,\dots,k_{2r}}.
$$
For any two objects $M_{k_1,\dots,k_{2r}}$ and $M_{h_1,\dots,h_{2r}}$ of this form, using Example \ref{monoepi} and the properties {\rm (a)} and {\rm (b)}, we get
$
[M_{k_1,\dots,k_{2r}}]_{i,m}=[M_{h_1,\dots,h_{2r}}]_{i,m}
$
if and only if $[U_{k_{2i-1},k_{2i}}]_m=[U_{h_{2i-1},h_{2i}}]_m
$,
if and only if $k_{2i-1}=h_{2i-1}$ and
$
[M_{k_1,\dots,k_{2r}}]_{i,e}=[M_{h_1,\dots,h_{2r}}]_{i,e}
$
if and only if
$[U_{k_{2i-1},k_{2i}}]_m=[U_{h_{2i-1},h_{2i}}]_m$,
if and only if $k_{2i}=h_{2i}$. In particular, from Proposition \ref{parziale}, the $r!$ objects $M_{k_1,\dots,k_{2r}}$ of $\Cal U_n$ are pairwise non-isomorphic. Moreover, given $2n$ permutations $\sigma_1,\dots,\sigma_n,\tau_1\dots,\tau_n$ of $\{1,2,\dots,r\}$ we have the following isomorphism
$$
M_{\sigma_1(1),\tau_1(1),\dots,\sigma_r(1),\tau_r(1)}\oplus M_{\sigma_1(2),\tau_1(2),\dots \sigma_r(2),\tau_r(2)}\oplus \dots \oplus M_{\sigma_1(r),\tau_1(r),\dots,\sigma_r(r),\tau_r(r)} \cong
$$
$$
\cong M_{1,1,\dots,1}\oplus M_{2,2,\dots 2}\oplus \dots \oplus M_{r,r,\dots, r}
$$
and the bijections $\varphi_{i,m}$ and $\varphi_{j,e}$ in the statement of Proposition \ref{parziale} are the permutations $\sigma_i$ and $\tau_j$ respectively.

\end{example}

\section{General case}
In this section, we fix a simple right $R$-module $S$ and we denote by $S^n$ the following object of $\Cal E_n$:
$$
0<S<S\oplus S< S\oplus S\oplus S <\dots<
\underbrace{S\oplus\dots\oplus S}_{n\text{-times}}=S^n.
$$
For any object $M \in \Cal E_n$ such that all factor modules $\Ui$ are uniserial (possibly zero), consider the subset $A\subseteq \{1,\dots,n\}$ such that $\Ui =0$ if and only if $i \in A$. We want to define the following object $S(M)$ of $\Cal E_n$:
for every $i=1,\dots,n$, $S(M)^{(i)}$ is the direct sum of finitely many copies of $S$ in such a way that $S(M)^{(i)}/S(M)^{(i-1)}=0$ if $i \notin A$ and $S(M)^{(i)}/S(M)^{(i-1)}\cong S$ when $i \in A$. So, $M\oplus S(M)$ is an object of $\Cal U_n$. Notice that $S^n=S(0)$ and $S(M)=0$ if $M$ is an object of $\Cal U_n$. Using the canonical embeddings and the canonical projections, it is immediate to check that for every $a=m,e$ we have:
\begin{itemize}
	\item
	$[M]_{i,a}=[0]_{i,a}$ for all $i \in A$;
	\item
	the $i$-th factor of $M\oplus S(M)$ is isomorphic to
	$\Ui$ if $i \notin A$ and it is isomorphic to $S$ if
	$i \in A$;
	\item
	$[M\oplus S(M)]_{i,a}=[M]_{i,a}$ for all
	$i \notin A$;
	\item
	$[M\oplus S(M)]_{i,a}=[S(M)]_{i,a}=[S^n]_{i,a}$
	for all $i \in A$.
\end{itemize}

\begin{remark} \label{rema}
If $M$ is an object of $\Cal E_n$ with all the factor modules $\Ui$ uniserial, then it can be viewed as an object of $\Cal U_{n-d}$ for a suitable integer $d$ (namely, the number of indexes such that $\Ui=0$). It follows that $M$ is of finite type, because $E_M=\End_{\Cal E_n}(M)\cong \End_{\Cal E_{n-d}}(M)$ as rings. Moreover, the direct sum of finitely many object of $\Cal E_n$ with all factor modules uniserial has a semilocal endomorphism ring (Remark \ref{sumsemi}).
\end{remark} 

\begin{theorem}\label{main}
Let $M_1,\dots,M_r,N_1,\dots,N_s$ be non-zero objects of $\Cal E_n$ with uniserial factor modules $\Ui_k=M_k^{(i)}/M_k^{(i-1)}$ for $k=1,\dots,r$ and $\Vi_l=N_l^{(i)}/N_l^{(i-1)}$ for $l=1,\dots,s$. Define 
$$
X_i:=\{k\mid k=1,\dots\,r,\ \Ui_k\neq 0\}\ 
\mbox{ and } \ 
Y_i:=\{l\mid l=1,\dots\,s,\ \Vi_l \neq 0\}.
$$
Then $\bigoplus_{k=1}^r M_k \cong \bigoplus_{l=1}^s N_l$ in $\Cal E_n$ if and only if there exist $2n$ bijections $\varphi_{i,a}: X_i\rightarrow Y_i$ such that $[M_k]_{i,a}=[N_{\varphi_{i,a}(k)}]_{i,a}$ for every $i=1,\dots,n$, $a=m,e$.
\end{theorem}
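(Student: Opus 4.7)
The plan is to reduce each direction to Proposition \ref{parziale} by using the padding construction $S(-)$ and a few copies of $S^n$ to move between $\Cal E_n$ and $\Cal U_n$.

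For the forward direction, suppose $\alpha\colon\bigoplus_{k=1}^r M_k\to\bigoplus_{l=1}^s N_l$ is an isomorphism in $\Cal E_n$ with inverse $\beta$. Since the $i^{th}$-induced morphism of $\alpha$ gives an $R$-module isomorphism $\bigoplus_{k\in X_i}\U{i}_k\cong \bigoplus_{l\in Y_i}\V{i}_l$, comparing Goldie dimensions at once yields $|X_i|=|Y_i|$. Fix $i$ and $a\in\{m,e\}$ and form the bipartite digraph on disjoint vertex sets $X_i$ and $Y_i$ defined exactly as in Proposition \ref{parziale}, via the $i^{th}$-induced morphisms of $\chi_{k,l}:=\pi'_l\alpha\varepsilon_k$ and $\chi'_{l,k}:=\pi_k\beta\varepsilon'_l$ (which, for $k\in X_i$ and $l\in Y_i$, are morphisms between non-zero uniserial modules). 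The Hall-type condition $|T|\le|N^+(T)|$ is verified by transcribing the argument of Proposition \ref{parziale}: for $T\subseteq X_i$, the non-surjectivity (if $a=e$) or non-injectivity (if $a=m$) of the relevant $i^{th}$-induced morphisms, together with the uniseriality of the non-zero factor modules $\V{i}_l$ with $l\in Y_i$, produces a surjection from $\bigoplus_{k\in X_i\setminus T}\U{i}_k$ onto a direct sum of non-zero quotients of the $\V{i}_l$ with $l\in Y_i\setminus N^+(T)$; comparing dual Goldie dimensions and using $|X_i|=|Y_i|$ gives $|T|\le|N^+(T)|$. Invoking \cite[Lemma~2.1]{DF} produces the required bijection $\varphi_{i,a}\colon X_i\to Y_i$.

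For the backward direction, set $\widetilde M_k:=M_k\oplus S(M_k)\in\Cal U_n$ and $\widetilde N_l:=N_l\oplus S(N_l)\in\Cal U_n$. The claim is that
$$
A\ :=\ \bigoplus_{k=1}^r \widetilde M_k\oplus(S^n)^s\ \cong\ \bigoplus_{l=1}^s \widetilde N_l\oplus(S^n)^r\ =:\ B
$$
in $\Cal U_n$: both sides are $(r+s)$-fold direct sums of objects of $\Cal U_n$, and for each $(i,a)$ the required class-preserving permutation of $\{1,\dots,r+s\}$ is built by sending $k\in X_i$ to $\varphi_{i,a}(k)\in Y_i$ (matching $i^{th}$-classes by hypothesis) and then pairing the remaining summands on each side arbitrarily (they all have $i^{th}$-class $[S]_a$, and the two counts coincide because $|X_i|=|Y_i|$ and both sides carry $r+s$ summands in total). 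Proposition \ref{parziale} then gives $A\cong B$. Setting $T_1:=\bigoplus_k S(M_k)\oplus(S^n)^s$ and $T_2:=\bigoplus_l S(N_l)\oplus(S^n)^r$, both are objects of $\Cal E_n$ whose factor modules are finite direct sums of copies of $S$; since such an object is determined up to isomorphism in $\Cal E_n$ by the sequence of Goldie dimensions of its factor modules, the equalities $|X_i|=|Y_i|$ for all $i$ give $T_1\cong T_2$ in $\Cal E_n$. Rewriting $A=\bigoplus_k M_k\oplus T_1$ and $B=\bigoplus_l N_l\oplus T_2$, the isomorphism $A\cong B$ becomes $\bigoplus_k M_k\oplus T_1\cong\bigoplus_l N_l\oplus T_1$; since $E_{T_1}$ is semilocal (Remark \ref{sumsemi}), direct-summand cancellation for objects with semilocal endomorphism ring (\cite[Theorem~4.5]{libro}) yields $\bigoplus_k M_k\cong\bigoplus_l N_l$.

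The main obstacle lies in the backward direction: one must check that the summand counts on each side of the padded isomorphism line up so that Proposition \ref{parziale} applies, and that $T_1\cong T_2$ holds in $\Cal E_n$ (a semisimple-chain argument), before invoking cancellation. The forward direction is a transcription of the digraph argument of Proposition \ref{parziale} with the vertex sets restricted from $\{1,\dots,r\}$ and $\{1,\dots,s\}$ to $X_i$ and $Y_i$, the key observation being that zero factor modules contribute nothing to the Goldie-dimension comparison.
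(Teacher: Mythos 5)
Your proof is correct, but your forward direction takes a genuinely different route from the paper's. For the ``only if'' part the paper does not rerun the digraph argument: after establishing $|X_i|=|Y_i|$ it pads both sides (adding the $S(M_k)$, the $S(N_l)$ and $r-s$ copies of $S^n$), applies Proposition \ref{parziale} to the resulting objects of $\Cal U_n$ to obtain $2n$ permutations of $\{1,\dots,r\}$, and then shows by an explicit swap argument (using the chain of equalities $[N'_{\varphi_{i,a}(k)}]_{i,a}=[S(N_{\varphi_{i,a}(k)})]_{i,a}=[S(M_j)]_{i,a}=[M'_j]_{i,a}$ for indices outside $X_i$ and $Y_i$) that each permutation can be rearranged so as to carry $X_i$ onto $Y_i$; restriction then yields the bijections. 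You instead transcribe the bipartite-digraph/Hall argument of Proposition \ref{parziale} with the vertex sets cut down to $X_i$ and $Y_i$; this is sound because the zero factor modules contribute nothing to the (dual) Goldie dimension counts, and it delivers the bijections directly without the permutation-rearrangement step, at the cost of repeating the combinatorial machinery instead of invoking Proposition \ref{parziale} as a black box. Your backward direction is essentially the paper's (padding, Proposition \ref{parziale}, Evans' cancellation), with the cosmetic difference that you pad symmetrically with $(S^n)^s$ and $(S^n)^r$ and thereby avoid the case distinction $r\ge s$.

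One small imprecision to fix: the blanket claim that an object of $\Cal E_n$ whose factor modules are finite direct sums of copies of $S$ is determined up to isomorphism by the multiplicities is false in general (over $\Z$, the chains $0<p\Z/p^2\Z<\Z/p^2\Z$ and $0<\Z/p\Z<(\Z/p\Z)^2$ have the same factor modules but are not isomorphic in $\Cal E_2$). Your isomorphism $T_1\cong T_2$ is nevertheless correct, because $S(M_k)$, $S(N_l)$ and $S^n$ are by construction \emph{split} chains, each $S(M)^{(i)}$ being a direct summand complemented by copies of $S$, so the isomorphism can be assembled summand by summand; this is exactly the paper's (equally terse) justification of its isomorphism (\ref{Siso}), so you should phrase the claim in terms of the split structure. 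Also, for the semilocality of $E_{T_1}$ the right citation is Remark \ref{rema} rather than Remark \ref{sumsemi}, since the $S(M_k)$ need not lie in $\Cal U_n$.
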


\begin{proof}
Assume that $\bigoplus_{k=1}^r M_k \cong \bigoplus_{l=1}^s N_l$ in $\Cal E_n$. For any $i=1,\dots,n$, we have the following isomorphisms in Mod-$R$:
$$
\bigoplus_{k \in X_i}\Ui_k\cong \bigoplus_{k=1}^r \Ui_k\cong
\frac{(\bigoplus_{k=1}^r M_k)^{(i)}}{(\bigoplus_{k=1}^r M_k)^{(i-1)}}\cong
\frac{(\bigoplus_{l=1}^s N_l)^{(i)}}{(\bigoplus_{l=1}^s N_l)^{(i-1)}}
\cong \bigoplus_{l=1}^s \Vi_l \cong \bigoplus_{l \in Y_i}\Vi_l.
$$
So, looking at the Goldie dimension, we get that $|X_i|=|Y_i|$ for every $i=1,\dots,n$. Assume that $r\geq s$. Then, we have the following isomorphism in $\Cal E_n$
\begin{equation}\label{Siso}
\bigoplus_{k=1}^r S(M_k)\cong \left(
\bigoplus_{l=1}^s S(N_l) \right)  \oplus
\left( \underbrace{S^n \oplus S^n \oplus \dots \oplus S^n}_{(r-s)\text{-times}} \right)
\end{equation}
(notice that the $i^{th}$ submodule of both sides is isomorphic to the direct sum of $i r-(|X_1|+\dots+|X_i|)$ copies of $S$)
and so, by hypothesis, we have an isomorphism in $\Cal E_n$
$$
\bigoplus_{k=1}^r \left( M_k\oplus S(M_k)\right) \cong \left(
\bigoplus_{l=1}^s \left(N_l\oplus S(N_l) \right) \right)  \oplus
\left( \underbrace{S^n \oplus S^n \oplus \dots \oplus S^n}_{(r-s)\text{-times}} \right)
$$
where all the direct summands are in $\Cal U_n$. Write $M'_k=M_k\oplus S(M_k)$ for $k=1,\dots,r$, $N'_l=N_l\oplus S(N_l)$ for $l=1,\dots,s$ and $N'_l=S^n$ for $l=s+1,\dots,r$. From Proposition $\ref{parziale}$, there exist $2n$ permutations $\varphi_{i,a}$ of $\{1,2,\dots,r\}$, where $i=1,\dots,n$ and $a=m,e$, such that $[M'_k]_{i,a}=[N'_{\varphi_{i,a}(k)}]_{i,a}$. Let $k \in X_i$ and assume that $\varphi_{i,a}(k)\notin Y_i$. Since $|X_i|=|Y_i|$, there exists $j \notin X_i$ such that $\varphi_{i,a}(j)\in Y_i$. By construction, we have
$$
[M_k]_{i,a}=[M'_k]_{i,a}=[N'_{\varphi_{i,a}(k)}]_{i,a}=[S(N_{\varphi_{i,a}(k)})]_{i,a}=[S(M_j)]_{i,a}=[M'_j]_{i,a}=
$$
$$
[N'_{\varphi_{i,a}(j)}]_{i,a}=[N_{\varphi_{i,a}(j)}]_{i,a}
$$
and therefore, we can rearrange the permutation $\varphi_{i,a}$ in order to map $k$ into $\varphi_{i,a}(j)$ and $j$ into $\varphi_{i,a}(k)$, without changing its property of preserving the classes. In particular, we can always assume that $\varphi_{i,a}$ maps $X_i$ into $Y_i$, so that $\varphi_{i,a}\mid_{X_i}:X_i\rightarrow Y_i$ are the sought bijections. Similarly, for $r<s$.

Conversely, assume that there exist $2n$ bijections $\varphi_{i,a}: X_i\rightarrow Y_i$ such that $[M_k]_{i,a}=[N_{\varphi_{i,a})k=}]_{i,a}$ for $i=1,\dots,n$, $a=m,e$. Assume that $r\geq s$. Notice that the isomorphism in (\ref{Siso}) depends only on the fact that $|X_i|=|Y_i|$ for every $i =1,\dots,n$, so it holds also in this case. Consider the two direct sums
$$
\bigoplus_{k=1}^r \left( M_k\oplus S(M_k)\right)
$$
and
$$\left(
\bigoplus_{l=1}^s \left(N_l\oplus S(N_l) \right) \right)  \oplus
\left( \underbrace{S^n \oplus S^n \oplus \dots \oplus S^n}_{(r-s)\text{-times}} \right)
$$
and define, as before, $M'_k=M_k\oplus S(M_k)$ for $k=1,\dots,r$, $N'_l=N_l\oplus S(N_l)$ for $l=1,\dots,s$ and $N'_l=S^n$ for $l=s+1,\dots,r$. Fix $i=1,\dots,n$ and $a=m,e$. Then, for any $k \notin X_i$ and any $l \notin Y_i$, we have $[M'_k]_{i,a}=[S(M_k)]_{i,a}=[S(N_l)]_{i,a}=[N'_l]_{i,a}$, and therefore we can extend the bijection $\varphi_{i,a}$ to a bijection $\{1,2,\dots,r\}\rightarrow \{1,2,\dots,s,s+1,\dots,r\}$. Using Proposition \ref{parziale}, we get
$$
\bigoplus_{k=1}^r \left( M_k\oplus S(M_k)\right) \cong \left(
\bigoplus_{l=1}^s \left(N_l\oplus S(N_l) \right) \right)  \oplus
\left( \underbrace{S^n \oplus S^n \oplus \dots \oplus S^n}_{(r-s)\text{-times}} \right)
$$
in $\Cal E_n$. Now, applying \cite[Theorem 4.5]{libro}, we can cancel the two terms $\bigoplus_{k=1}^r S(M_k)$ and $\left(
\bigoplus_{l=1}^s S(N_l) \right)  \oplus
\left( S^n \oplus S^n \oplus \dots \oplus S^n \right)$  from direct sums, because they are isomorphic objects with semilocal endomorphism ring (Remark \ref{rema}). (Notice that Evans' result is stated in \cite[Theorem 4.5]{libro} only for modules, but its proof shows that it holds in any additive category.)
\end{proof}

\begin{example}
Notice that, in Theorem \ref{main}, dropping the assumption that all the objects are in  $\Cal U_n$, it could occur that $r\neq s$. A very simple example is the following. Let $M$ be any object of $\Cal E_n$ not in $\Cal U_n$ with all factor modules uniserial modules (some of whom are necessarily zero). Set $M_1=M\oplus S(M)$, $N_1=M$ and $N_2=S(M)$. Then, $M_1, N_1$ and $N_2$ are non-zero objects of $\Cal E_n$ and $M_1\cong N_1\oplus N_2$, but  $r=1$ and $s=2$. In particular, this example shows that the objects of $\Cal U_n$ are indecomposable objects in the category $\Cal U_n$, but they are not indecomposable in the category $\Cal E_n$.
\end{example}

\bibliographystyle{amsalpha}

\end{document}